\documentclass[twoside,leqno,10pt, A4]{amsart}
\usepackage{amsfonts}
\usepackage{amsmath}
\usepackage{amscd}
\usepackage{amssymb}
\usepackage{amsthm}
\usepackage{amsrefs}
\usepackage{latexsym}
\usepackage{mathrsfs}
\usepackage{bbm}
\usepackage{enumerate}
\usepackage{color}
\setlength{\textwidth}{18.2cm}
\setlength{\oddsidemargin}{-0.7cm}
\setlength{\evensidemargin}{-0.7cm}
\setlength{\topmargin}{-0.7cm}
\setlength{\headheight}{0cm}
\setlength{\headsep}{0.5cm}
\setlength{\topskip}{0cm}
\setlength{\textheight}{23.9cm}
\setlength{\footskip}{.5cm}

\begin{document}

\newtheorem{theorem}[subsection]{Theorem}
\newtheorem{proposition}[subsection]{Proposition}
\newtheorem{lemma}[subsection]{Lemma}
\newtheorem{corollary}[subsection]{Corollary}
\newtheorem{conjecture}[subsection]{Conjecture}
\newtheorem{prop}[subsection]{Proposition}
\numberwithin{equation}{section}
\newcommand{\mr}{\ensuremath{\mathbb R}}
\newcommand{\mc}{\ensuremath{\mathbb C}}
\newcommand{\dif}{\mathrm{d}}
\newcommand{\intz}{\mathbb{Z}}
\newcommand{\ratq}{\mathbb{Q}}
\newcommand{\natn}{\mathbb{N}}
\newcommand{\comc}{\mathbb{C}}
\newcommand{\rear}{\mathbb{R}}
\newcommand{\prip}{\mathbb{P}}
\newcommand{\uph}{\mathbb{H}}
\newcommand{\fief}{\mathbb{F}}
\newcommand{\majorarc}{\mathfrak{M}}
\newcommand{\minorarc}{\mathfrak{m}}
\newcommand{\sings}{\mathfrak{S}}
\newcommand{\fA}{\ensuremath{\mathfrak A}}
\newcommand{\mn}{\ensuremath{\mathbb N}}
\newcommand{\mq}{\ensuremath{\mathbb Q}}
\newcommand{\half}{\tfrac{1}{2}}
\newcommand{\f}{f\times \chi}
\newcommand{\summ}{\mathop{{\sum}^{\star}}}
\newcommand{\chiq}{\chi \bmod q}
\newcommand{\chidb}{\chi \bmod db}
\newcommand{\chid}{\chi \bmod d}
\newcommand{\sym}{\text{sym}^2}
\newcommand{\hhalf}{\tfrac{1}{2}}
\newcommand{\sumstar}{\sideset{}{^*}\sum}
\newcommand{\sumprime}{\sideset{}{'}\sum}
\newcommand{\sumprimeprime}{\sideset{}{''}\sum}
\newcommand{\shortmod}{\ensuremath{\negthickspace \negthickspace \negthickspace \pmod}}
\newcommand{\V}{V\left(\frac{nm}{q^2}\right)}
\newcommand{\sumi}{\mathop{{\sum}^{\dagger}}}
\newcommand{\mz}{\ensuremath{\mathbb Z}}
\newcommand{\leg}[2]{\left(\frac{#1}{#2}\right)}
\newcommand{\muK}{\mu_{\omega}}

\title[Moments of central values of quartic Dirichlet $L$-functions]{Moments of central values of quartic Dirichlet $L$-functions}

%%\date{\today}
\author{Peng Gao and Liangyi Zhao}

\begin{abstract}
In this paper, we study moments of the central values of quartic Dirichlet $L$-functions and establish quantitative non-vanishing result for these $L$-values.
\end{abstract}

\maketitle

\noindent {\bf Mathematics Subject Classification (2010)}: 11A15, 11L05, 11M06, 11N37 \newline

\noindent {\bf Keywords}: quartic Dirichlet character, quartic large sieve, moments of $L$-functions

\section{Introduction}

 Moments of $L$-functions at the central point over a family of characters of a fixed order have been extensively studied in the literature. The first and second moments of quadratic Dirichlet $L$-functions were evaluated by M. Jutila \cite{Jutila}.   The error term for the first moment in \cite{Jutila} was improved in \cites{DoHo, MPY, ViTa}. Asymptotic formulas for the second and third moments for the same family with power savings in the error terms were obtained by K. Soundararajan \cite{sound1}.  The error term for the third moment was subsequently improved in \cite{DGH} and \cite{Young1}.  In the number field case, the authors studied in \cites{G&Zhao6} the first and second moments of quadratic Hecke $L$-functions in $\mq(i)$ and $\mq(\omega)$, where $\omega=\frac {-1+\sqrt{3}i}{2}$. In \cite{Luo}, W. Luo considered the first two moments of cubic Hecke $L$-functions in $\mq(\omega)$.  Moments of various families of higher order Hecke $L$-functions were studied in \cites{FaHL, FHL,Diac, BFH}, using the method of double Dirichlet series. \newline

Although many results are available on moments of quadratic Dirichlet $L$-functions, less is known for the moments of higher order Dirichlet $L$-functions. For the family of cubic Dirichlet $L$-functions, this is investigated by S. Baier and M. P. Young in \cite{B&Y} and they also mentioned that it is plausible to extend their methods to study moments of quartic and sextic Dirichlet $L$-functions. Motivated by this, it is our goal in this paper to investigate the quartic case.  Writing $\chi_0$ for the principal character and letting $w: (0,\infty) \rightarrow \mr$ be any smooth, compactly supported function whose Fourier transform is denoted by $\widehat{w}$, we begin with a result that establishes an asymptotic expression for the first moment. \newline
\begin{theorem}
\label{firstmoment}
  With notations as above and assuming the truth of the Lindel\"of hypothesis, we have
\begin{equation*}
%%\label{eq:1}
 \sum_{(q,2)=1}\;  \sumstar_{\substack{\chi \bmod{q} \\ \chi^4 = \chi_0}} L(\half, \chi) w\leg{q}{Q} = C Q \widehat{w}(0) + O(Q^{9/10 + \varepsilon}),
\end{equation*}
where $C$ is a positive explicit constant given in \eqref{eq:c} and the asterisk on the sum over $\chi$ restricts the sum to primitive characters $\chi$ such that $\chi^2$ remains primitive.
\end{theorem}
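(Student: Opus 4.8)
We sketch the argument, which transposes to the ring of Gaussian integers $\mz[i]$ the strategy Baier and Young \cite{B&Y} applied to cubic Dirichlet $L$-functions; quartic residue symbols in $\mz[i]$ play here the role cubic symbols in $\mz[\omega]$ play there. The plan is to start from an approximate functional equation, which for a primitive quartic character $\chi\bmod q$ writes
\[
L(\half,\chi)\;=\;\sum_{n\ge 1}\frac{\chi(n)}{\sqrt n}\,V_1\!\leg{n}{X}\;+\;\frac{\tau(\chi)}{\sqrt q}\,\varepsilon_\infty(\chi)\sum_{n\ge 1}\frac{\overline\chi(n)}{\sqrt n}\,V_2\!\leg{nX}{q},
\]
with rapidly decaying $V_1,V_2$, a free parameter $X$, and an Archimedean factor $\varepsilon_\infty(\chi)$ depending only on the parity of $\chi$, so that we split the family into even and odd characters. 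Substituting this into the moment and interchanging the order of summation, we use that the conditions ``$\chi$ primitive'' and ``$\chi^2$ primitive'' force $q$ to be squarefree with every prime factor congruent to $1$ modulo $4$ and each local factor of $\chi$ of order exactly $4$; such $\chi$ are then parametrised, through the quartic residue symbol, by primary squarefree $c\in\mz[i]$ that are products of distinct split primes with $N(c)=q$, and $\chi(n)=\leg{n}{c}_4$. In this way the moment becomes $M=M_1+M_2$, where (after detecting the squarefreeness of $c$ by a M\"obius inversion) $M_i$ is a smooth sum over $c$ with $N(c)\asymp Q$ and over $n$ of $N(c)^{-1/2}$-normalised terms built from $\leg{n}{c}_4$.

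For the main term we analyse $M_1$. The plan is to apply the law of quartic reciprocity, together with the supplementary laws to strip the even part of $n$, so that $c\mapsto\leg{n}{c}_4$ becomes a periodic function of $c$ modulo a fixed multiple of $n$, and then to apply Poisson summation to the sum over $c$. Since $\leg{\cdot}{n}_4$ is trivial on $(\mz[i]/n\mz[i])^{\times}$ precisely when $n$ is a perfect fourth power, the zero frequency produces a main term only from $n=m^4$; summing the ensuing $m^{-2}Q\,\widehat w(0)\cdot(\text{local density})$ over $m$ and carrying out the Euler product computation yields exactly $CQ\widehat w(0)$, with $C$ the absolutely convergent constant of \eqref{eq:c}. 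The remaining terms --- the nonzero Poisson frequencies, and the entire contribution of $n$ not a fourth power --- involve Gauss sums over $\mz[i]$ of essentially square-root size together with the rapid decay of $\widehat w$, which truncates the dual variable and reduces matters to bounding short sums $\sum_n a_n\psi(n)n^{-1/2}$ over the characters $\psi$ that arise; these we bound by combining a large sieve inequality for quartic characters with the Lindel\"of bound for the relevant Dirichlet $L$-functions. Taking $X\asymp\sqrt Q$ keeps this within $O(Q^{9/10+\varepsilon})$.

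The dual sum $M_2$ is the heart of the matter, and I expect it to be the main obstacle. Unlike the quadratic and cubic settings, there is no metaplectic theta function available to evaluate the sum of quartic Gauss sums $\sum_c\tau(\chi_c)\overline{\chi_c(n)}\,w\!\leg{N(c)}{Q}$ in closed form, so $M_2$ must be controlled by estimation alone, and since it carries no diagonal contribution the whole of it must be shown to be $O(Q^{9/10+\varepsilon})$. The plan is to open $\tau(\chi_c)$, absorb the twist $\overline{\chi_c(n)}$, and again use reciprocity and Poisson summation in $c$, reducing $M_2$ to a weighted sum over $n\ll Q^{1/2+\varepsilon}$ of complete Gauss sums against short sums of quartic symbols over $\mz[i]$; estimating these by the quartic large sieve and the Lindel\"of hypothesis, and then optimising the choice of $X$, produces the stated exponent. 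The combination of the missing quartic theta function with the fact that the presently available quartic large sieve is weaker than its cubic counterpart is precisely what forces the appeal to the Lindel\"of hypothesis and what limits the saving in the error term to $Q^{1/10-\varepsilon}$.
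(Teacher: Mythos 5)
Your overall skeleton --- approximate functional equation, parametrisation of the characters by primary squarefree $n\in\mz[i]$ free of rational prime divisors, main term from fourth powers, dual sum carrying quartic Gauss sums --- matches the paper. But there is a genuine gap in your treatment of the dual sum $M_2$, and it sits exactly where you flag the difficulty. You assert that no metaplectic input is available in the quartic case and that $M_2$ must therefore be handled by ``reciprocity and Poisson summation, the quartic large sieve and Lindel\"of.'' Neither tool can work here. The quantity to be bounded is essentially $\sum_{m\le B} m^{-1/2}\sum_{N(n)\asymp Q}\overline{\chi}_n(lm)\,g(n)N(n)^{-1/2}w(N(n)/Q)$; the Lindel\"of hypothesis says nothing about sums of Gauss sums, and if you feed $g(n)N(n)^{-1/2}$ into the large sieve as arbitrary bounded coefficients you get, after Cauchy--Schwarz, at best roughly $Q^{1/2}\cdot Q^{7/12+\varepsilon}$, which already exceeds the main term $Q$. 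The cancellation that must be exploited is among the Gauss sums $g(n)$ themselves as $n$ varies, and the paper obtains it from precisely the metaplectic-type result you declare unavailable: Lemma \ref{lem1} (from the authors' earlier work) gives the meromorphic continuation of $h(r,s;\lambda)=\sum_n\lambda(n)g(r,n)N(n)^{-s}$ with a pole at $s=5/4$ whose residue is $\ll N(r)^{1/8+\varepsilon}$, whence $H(l,X)\ll X^{1/2+\varepsilon}N(l_1)^{1/4}+X^{3/4}N(l_1)^{1/8+\varepsilon}$ and ultimately $\mathcal{M}^{\pm}_2\ll Q^{3/4+\varepsilon}B^{3/4+\varepsilon}$. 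Without this, or an equivalent estimate showing genuine cancellation in $\sum_n\overline{\chi}_n(l)g(n)$, your $M_2$ cannot be brought below $Q$.

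A second, related problem: your choice $X\asymp\sqrt Q$ (balanced functional equation) is incompatible with the bounds actually obtainable. With $\mathcal{M}^{\pm}_2\ll Q^{3/4+\varepsilon}B^{3/4+\varepsilon}$, a balanced split $B=Q^{1/2}$ gives $Q^{9/8}$, which swamps the main term; the paper must take the strongly unbalanced $A=Q^{4/5}$, $B=Q^{1/5}$, and the exponent $9/10$ is exactly the optimisation of $Q^{1/2}A^{1/2}+QA^{-1/4}$ against $Q^{3/4}B^{3/4}$. Finally, a minor difference of route on the principal sum: rather than reciprocity plus Poisson summation in the modulus variable, the paper executes the sum over $n\in\mz[i]$ directly as the Hecke $L$-function $L(s,\psi_{md^4})$ via Mellin inversion and a contour shift, extracting the main term from the pole at $s=1$ (which occurs only when $m$ is a fourth power) and bounding the shifted integral by Lindel\"of for $L(1/2+it,\psi_m)$; the paper is explicit that the quartic large sieve alone is too weak for this step, which is the entire reason the Lindel\"of hypothesis is assumed.
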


It is a conjecture that goes back to S. Chowla \cite{chow} that $L(1/2,\chi) \neq 0$ for all primitive Dirichlet characters $\chi$.  As a consequence of Theorem \ref{firstmoment}, a partial answer can be given for this conjecture for the character under our consideration.  We argue similar to the proof of \cite[Corollary 1.2]{B&Y}, using H\"older's inequality together with the well-known bound for the eighth moment of Dirichlet $L$-functions and the lower bound implied by the asymptotic formula in Theorem~\ref{firstmoment}, to obtain following non-vanishing result on the central values of quartic Dirichlet $L$-functions.
\begin{corollary} \label{coro:nonvanish}
Assume that the Lindel\"of hypothesis is true.  There exist infinitely many primitive Dirichlet characters $\chi$ of order $4$ such that $L(1/2, \chi) \neq 0$.  More precisely, the number of such characters with conductor $\leq Q$
is $\gg Q^{6/7- \varepsilon}$.
\end{corollary}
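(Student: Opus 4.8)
The plan is to derive the non-vanishing statement from Theorem~\ref{firstmoment} together with a standard upper bound for the eighth moment of Dirichlet $L$-functions, combined via H\"older's inequality, following the argument of \cite[Corollary 1.2]{B&Y}. Fix a smooth function $w:(0,\infty)\to\mr$ with $w\ge 0$, $\operatorname{supp} w\subset[1,2]$ and $\widehat{w}(0)=\int_0^\infty w(t)\,\dif t>0$, and let $\mathcal N(Q)$ denote the set of primitive characters $\chi$ of conductor $q$ with $(q,2)=1$, $\chi^4=\chi_0$, $\chi^2$ primitive, and $L(\half,\chi)\neq 0$; here the condition that $\chi^2$ be primitive forces $\chi$ to have order exactly $4$, while $w(q/Q)\neq 0$ forces $q\in[Q,2Q]$. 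Since characters with $L(\half,\chi)=0$ do not contribute, Theorem~\ref{firstmoment} gives, for all large $Q$,
\begin{equation*}
\Bigl|\sum_{\chi\in\mathcal N(Q)}L(\half,\chi)\,w\leg{q}{Q}\Bigr|=\Bigl|\sum_{(q,2)=1}\;\sumstar_{\substack{\chi\bmod q\\ \chi^4=\chi_0}}L(\half,\chi)\,w\leg{q}{Q}\Bigr|\ge CQ\,\widehat{w}(0)-O\!\left(Q^{9/10+\varepsilon}\right)\gg Q,
\end{equation*}
where we used that $C>0$ and $\widehat{w}(0)>0$.

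Next I would apply H\"older's inequality with exponents $8$ and $8/7$, splitting $w=w^{1/8}\cdot w^{7/8}$ and using $w\ge0$:
\begin{equation*}
\Bigl|\sum_{\chi\in\mathcal N(Q)}L(\half,\chi)\,w\leg{q}{Q}\Bigr|\le\Bigl(\sum_{\chi\in\mathcal N(Q)}\bigl|L(\half,\chi)\bigr|^{8}\,w\leg{q}{Q}\Bigr)^{1/8}\bigl|\mathcal N(Q)\bigr|^{7/8}.
\end{equation*}
To bound the eighth moment I would discard the restrictions defining $\mathcal N(Q)$ and enlarge the inner sum to all characters modulo $q$ with $q\le 2Q$; since the Lindel\"of hypothesis gives $|L(\half,\chi)|\ll_\varepsilon q^{\varepsilon}$ and hence the well-known eighth moment bound $\sum_{\chi\bmod q}|L(\half,\chi)|^{8}\ll q^{1+\varepsilon}$, this yields
\begin{equation*}
\sum_{\chi\in\mathcal N(Q)}\bigl|L(\half,\chi)\bigr|^{8}\,w\leg{q}{Q}\ \ll\ \sum_{q\le 2Q}\ \sum_{\chi\bmod q}\bigl|L(\half,\chi)\bigr|^{8}\ \ll\ Q^{2+\varepsilon}.
\end{equation*}

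Combining the three displays gives $Q\ll Q^{(2+\varepsilon)/8}\bigl|\mathcal N(Q)\bigr|^{7/8}$, hence $\bigl|\mathcal N(Q)\bigr|\gg Q^{6/7-\varepsilon}$ after relabelling $\varepsilon$. Since every character counted by $\mathcal N(Q)$ has conductor $\le 2Q$, a harmless rescaling of $Q$ produces the claimed lower bound for the number of primitive quartic $\chi$ of conductor $\le Q$ with $L(\half,\chi)\neq 0$, and in particular this number tends to infinity, so there are infinitely many such $\chi$. I do not expect a genuine obstacle here: essentially all the analytic content lies in Theorem~\ref{firstmoment}, and the only points requiring care are the bookkeeping with the primitivity conditions---so that $\mathcal N(Q)$ really consists of order-$4$ characters---and the observation that the explicit positivity of the constant $C$ is what makes the first moment $\gg Q$ rather than merely $O(Q)$.
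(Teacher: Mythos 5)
Your proposal is correct and follows essentially the same route the paper intends: the paper only sketches this corollary, citing the argument of \cite[Corollary 1.2]{B&Y}, which is exactly the H\"older splitting with exponents $8$ and $8/7$, the averaged eighth-moment bound $\ll Q^{2+\varepsilon}$, and the lower bound $\gg Q$ from Theorem \ref{firstmoment} that you carry out. The only cosmetic remark is that the eighth-moment bound averaged over $q\le 2Q$ and $\chi \bmod q$ is available unconditionally via the large sieve, so invoking Lindel\"of for that step is unnecessary (though harmless, since the corollary assumes it anyway).
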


    For $m \in \mz, n \in \mz[i], (n,2)=1$, let $\psi_{m}((n)) = \leg{m}{n}_4$ where $\leg {\cdot}{\cdot}_4$ is the quartic residue symbol in $\mq(i)$ defined in Section \ref{sec2.4}. It is also shown there that $\psi_m$ is a quartic Hecke character of trivial infinite type modulo $16m$. Our next result concerns bounds on the second moment.
\begin{theorem}
\label{secmom}
  We have for any $Q\ge 1$ that
\begin{equation} \label{secondm}
\sum\limits_{\substack{q\le Q}}\ \sumstar\limits_{\substack{\chi \bmod q\\ \chi^4=\chi_0}} \left| L(\half+it,\chi) \right|^2 \ll Q^{7/6+\varepsilon} (1+|t|)^{1/2+\varepsilon}.
\end{equation}
 Further denote for any rational integer $m$,
\begin{equation}
\label{eq:HeckeL}
 L(s, \psi_m) =\sum_{\substack{n \in \mz[i] \\ n \equiv 1 \bmod{(1+i)^3}}} \psi_m(n) N(n)^{-s}.
\end{equation}
 Then we have
\begin{equation}
\label{eq:secondmomentHecke}
 \sumprime_{m \leq M} \left| L(1/2 + it, \psi_m) \right|^2 \ll M^{3/2 + \varepsilon} (1+|t|)^{5/4+ \varepsilon},
\end{equation}
where $\sum'$ means that the sum runs over square-free elements of $\mz$.
\end{theorem}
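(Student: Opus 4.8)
The plan is to prove both estimates by the same scheme: expand $|L(\half+it,\cdot)|^2$ through an approximate functional equation into a sum over a pair of variables, dispose of the diagonal part trivially, and control the off-diagonal part by the large sieve inequality for quartic characters.

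I would treat \eqref{secondm} first. The asterisk restricts the inner sum to primitive $\chi$ with $\chi^2$ also primitive, which in particular forces $q$ to be odd and square-free; each such $\chi$ is then primitive of conductor $q$, so $L(s,\chi)L(s,\overline{\chi})$ is an entire $L$-function of degree $2$ and conductor $q^2$, of analytic conductor $\asymp q^2(1+|t|)^2$. Writing $|L(\half+it,\chi)|^2=L(\half+it,\chi)L(\half-it,\overline{\chi})$, invoking the associated approximate functional equation (which has root number $1$), subdividing the two variables dyadically into boxes $m\sim M$, $n\sim N$ with $MN\ll (Q(1+|t|))^{1+\varepsilon}$, and detaching the smooth weight on each box by a Mellin transform, one is reduced to bounding
\[
\sum_{q\le Q}\ \sumstar_{\substack{\chi \bmod q\\ \chi^4=\chi_0}}\ \left|\sum_{m\sim M}\frac{\chi(m)}{\sqrt m}\right|\ \left|\sum_{n\sim N}\frac{\overline{\chi(n)}}{\sqrt n}\right|,
\]
with harmless archimedean phases suppressed. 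The diagonal terms $m=n$ contribute $\ll\sum_{q\le Q}q^{\varepsilon}\log\bigl(Q(1+|t|)\bigr)\ll Q^{1+\varepsilon}(1+|t|)^{\varepsilon}$, since the number of admissible characters modulo $q$ is $\ll q^{\varepsilon}$, and this is within the asserted bound. For the off-diagonal part one applies Cauchy--Schwarz in the $(q,\chi)$ variable to separate the $m$- and $n$-sums, which reduces matters to a bound for
\[
\sum_{q\le Q}\ \sumstar_{\substack{\chi \bmod q\\ \chi^4=\chi_0}}\left|\sum_{m\sim M}a_m\,\chi(m)\right|^2,\qquad M\ll (Q(1+|t|))^{1/2+\varepsilon},
\]
valid for arbitrary $a_m\in\comc$. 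Into this one feeds the large sieve inequality for quartic Dirichlet characters; inserting that bound and optimizing over the admissible splittings $MN\ll (Q(1+|t|))^{1+\varepsilon}$ yields \eqref{secondm}.

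The estimate \eqref{eq:secondmomentHecke} would be obtained by running the same argument in $\mq(i)$. Since $\psi_m$ has conductor dividing $16m$, the quantity $|L(\half+it,\psi_m)|^2=L(\half+it,\psi_m)L(\half-it,\overline{\psi_m})$ is a central value of a degree-$4$ $L$-function of conductor $\ll m^{4}$ and analytic conductor $\ll m^{4}(1+|t|)^{4}$, and its approximate functional equation expresses it as a sum over pairs of integral ideals $\mathfrak n_1,\mathfrak n_2$ of $\mz[i]$ with $N(\mathfrak n_1)N(\mathfrak n_2)\ll (m(1+|t|))^{2+\varepsilon}$. After squaring out, the diagonal $\mathfrak n_1=\mathfrak n_2$ contributes $\ll M^{1+\varepsilon}(1+|t|)^{\varepsilon}$; the off-diagonal terms carry factors $\psi_m(\mathfrak n_1)\overline{\psi_m(\mathfrak n_2)}=\leg{m}{n_1n_2^{3}}_4$, where $n_i$ is a primary generator of $\mathfrak n_i$. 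Applying quartic reciprocity in $\mz[i]$, together with the supplementary laws at the units and at the prime above $2$, moves $m$ into the lower argument, so that --- after Cauchy--Schwarz in $m$ to separate the two ideal variables and a splitting of $m$ (and of the $n_i$) into residue classes modulo a fixed power of $2$ --- one is left with bilinear forms of the shape $\sumprime_{m\le M}\bigl|\sum_{N(\mathfrak n)\ll (M(1+|t|))^{1+\varepsilon}}b_{\mathfrak n}\leg{n}{m}_4\bigr|^2$ over square-free $m$, again amenable to the large sieve for quartic characters. Optimizing the resulting ranges gives \eqref{eq:secondmomentHecke}; the less favourable exponent in $M$ and the larger power of $1+|t|$, relative to \eqref{secondm}, reflect that the effective length of the approximate functional equation is comparatively longer here and that the moduli arising after reciprocity have norm as large as $\asymp M^{2}$.

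The crux in both parts is the final step. The large sieve for quartic characters is substantially weaker than the classical large sieve over all Dirichlet characters --- there is no free gain of a $Q^{2}$ term --- and, unlike in the quadratic case, its error term is not dominated by the diagonal; consequently the optimization over the two dyadic parameters is delicate, and the exponents $7/6$, $3/2$ together with the powers $(1+|t|)^{1/2}$, $(1+|t|)^{5/4}$ are precisely what that inequality, combined with the lengths of the two approximate functional equations, allows. Keeping the dependence on $t$ uniform in the quartic large sieve --- and hence in the attendant lattice-point counts in $\mz[i]$ --- is the most delicate technical ingredient.
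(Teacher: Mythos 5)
Your overall strategy --- approximate functional equation followed by the quartic large sieve --- is exactly what the paper prescribes (it omits the proof, pointing to \cite[Theorem 1.3]{B&Y} and to the terms $Q^{7/6}+Q^{2/3}M$ and $Q^{5/4}+Q^{1/2}M$ of Lemma \ref{quarticlargesieve}). But your specific implementation of \eqref{secondm} has a genuine gap: you expand $|L(\half+it,\chi)|^2$ via the \emph{degree-two} approximate functional equation, producing a pair of variables constrained only by $mn\ll (Q(1+|t|))^{1+\varepsilon}$, and then claim that Cauchy--Schwarz plus the large sieve on each variable, ``optimized over the admissible splittings,'' gives $Q^{7/6+\varepsilon}(1+|t|)^{1/2+\varepsilon}$. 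It does not, because of the unbalanced boxes. After Cauchy--Schwarz the box $m\sim M$, $n\sim N$ costs
\begin{equation*}
\Bigl((Q^{7/6}+Q^{2/3}M)(Q^{7/6}+Q^{2/3}N)\Bigr)^{1/2}(QMN)^{\varepsilon},
\end{equation*}
and for, say, $N\asymp 1$ and $M\asymp Q(1+|t|)$ this is $\gg Q^{11/12}M^{1/2}\asymp Q^{17/12}(1+|t|)^{1/2}$; even substituting the most favourable alternatives from the minimum in Lemma \ref{quarticlargesieve} (or the trivial bound $\ll Q^{1+\varepsilon}$ for the short factor, since there are $\ll Q^{1+\varepsilon}$ admissible pairs $(q,\chi)$) one cannot get below about $Q^{5/4}$ in the worst unbalanced regime. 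The target $Q^{7/6}$ is attained only when $M\asymp N$. The correct route is the \emph{degree-one} approximate functional equation for $L(\half+it,\chi)$ itself (Proposition \ref{prop:AFE}) with balanced parameters $A=B\asymp\sqrt{q(1+|t|)}$: then $|L|^2\ll |S_1|^2+|S_2|^2$ with each $S_i$ a single character sum of length $\ll (Q(1+|t|))^{1/2+\varepsilon}$, to which the term $Q^{7/6}+Q^{2/3}M$ applies directly and yields $Q^{7/6}(1+|t|)^{1/2}$ at once, with no diagonal/off-diagonal dissection needed. (You also need to reduce to square-free $m$ by writing $m=ab^2$, since Lemma \ref{quarticlargesieve} is stated for square-free $m$ only; this is routine but should be said.)

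The same structural problem recurs in your treatment of \eqref{eq:secondmomentHecke}: you use the degree-four (over $\mq$) functional equation for $|L(\half+it,\psi_m)|^2$ and then quartic reciprocity to flip $\leg{m}{n}_4$ into a character in $m$, which again leaves you with unbalanced ideal ranges and, after reciprocity, moduli of norm up to $m^2$ that the large sieve will not absorb within $M^{3/2}$. The intended argument is simpler and avoids reciprocity altogether: apply the degree-one (over $\mq(i)$) approximate functional equation to $L(\half+it,\psi_m)$, getting sums over $n\in\mz[i]$ with $N(n)\ll (m(1+|t|))^{1+\varepsilon}$, square, and then invoke the \emph{dual} form of the large sieve --- the norm $C_1(M,Q)$ of \cite[p. 907]{G&Zhao}, which already bounds $\sumprime_{m\le M}\bigl|\sum_{N(n)\le Q}a_n\leg{m}{n}_4\bigr|^2$ with $m$ running over rational square-free integers --- together with the bound $C_1(M,Q)\ll (QM)^{\varepsilon}(Q^{5/4}+Q^{1/2}M)$ recorded after \eqref{C2e1}. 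Your closing remarks about the delicacy of the quartic large sieve are apt, but the decisive point you are missing is that one must keep the character sum in one variable of controlled length $\asymp(\text{conductor})^{1/2}$ rather than open the square into a two-variable hyperbolic region.
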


As the proof of the above result is similar to that of \cite[Theorem 1.3]{B&Y} (using the approximate functional equation and then the appropriate version of the large sieve inequality Lemmas~\ref{quartls} and~\ref{quarticlargesieve}), we shall therefore omit it in the paper. \newline

We point out here that the proof of \eqref{secondm} uses a large sieve inequality for quartic Dirichlet characters given in Lemma \ref{quarticlargesieve} by using the term $Q^{7/6}+Q^{2/3}M$ in the minimum there. Similarly, the proof of \eqref{eq:secondmomentHecke} follows from recalling the definition of $C_1(M,Q)$ in \cite[p. 907]{G&Zhao} and the bound $C_1(M,Q) \ll (QM)^{\varepsilon}(Q^{5/4}+Q^{1/2}M)$ by Lemma \ref{quarticlargesieve}. A variant of this for the cubic case plays a key role in the treatment of the error term in the work of Baier and Young \cite{B&Y} on the first moment of cubic Dirichlet $L$-functions.  However, our Lemma \ref{quarticlargesieve} is just short of producing admissible error term in our first moment result, even after incorporating the recursive technique of Baier and Young (see third arXiv version of the paper \cite{B&Y} arXiv:0804.2233v3).  This observation drives us to seek for bounds of individual quartic Dirichlet $L$-functions in order to control the error term and leads us to derive our Theorem \ref{firstmoment} under the Lindel\"of hypothesis since the current best known subconvexity bounds for the $L$-functions under our consideration do not provide a satisfactory error term either. \newline

\subsection{Notations} The following notations and conventions are used throughout the paper.\\
%%\noindent $\Phi(t)$ for $\Phi_X(t)$. \newline
\noindent $e(z) = \exp (2 \pi i z) = e^{2 \pi i z}$. \newline
$f =O(g)$ or $f \ll g$ means $|f| \leq cg$ for some unspecified
positive constant $c$. \newline
%% $f =o(g)$ means $\displaystyle \lim_{x \rightarrow \infty}f(x)/g(x)=0$. \newline
$\mu_{[i]}$ denotes the M\"obius function on $\mz[i]$. \newline
$\zeta_{\mq(i})(s)$ is the Dedekind zeta function for the field $\mq(i)$.

\section{Preliminaries}
\label{sec 2}

In this section we provide various tools used throughout the paper.
%%----------------------------------------------------------------------------
\subsection{Quartic symbol and primitive quartic Dirichlet characters}
\label{sec2.4}
%%----------------------------------------------------------------------------
   The symbol $(\leg{\cdot}{n})_4$ is the quartic
residue symbol in the ring $\mz[i]$.  For a prime $\varpi \in \mz[i]$
with $N(\varpi) \neq 2$, the quartic residue symbol is defined for $a \in
\mz[i]$, $(a, \varpi)=1$ by $\leg{a}{\varpi}_4 \equiv
a^{(N(\varpi)-1)/4} \pmod{\varpi}$, with $\leg{a}{\varpi}_4 \in \{
\pm 1, \pm i \}$. When $\varpi | a$, we define
$\leg{a}{\varpi}_4 =0$.  Then the quartic residue symbol can be extended
to any composite $n$ with $(N(n), 2)=1$ multiplicatively. We further set $\leg {\cdot}{n}_4=1$ when $n$ is a unit in $\mz[i]$. We also define the quadratic residue symbol $\leg {\cdot}{n}$ so that $\leg {\cdot}{n}=\leg {\cdot}{n}^2_4$. \newline

 Note that in $\mz[i]$, every ideal co-prime to $2$ has a unique
generator congruent to 1 modulo $(1+i)^3$.  Such a generator is
called primary. Observe that
$n=a+bi, a, b \in \mz $ in $\mz[i]$ is congruent to $1
\bmod{(1+i)^3}$ if and only if $a \equiv 1 \pmod{4}, b \equiv
0 \pmod{4}$ or $a \equiv 3 \pmod{4}, b \equiv 2 \pmod{4}$ by \cite[Lemma
6, p. 121]{I&R}. It follows from this that we have
\begin{align}
\label{a&b}
  a \equiv (-1)^{\frac {N(n)-1}{4}}  \quad \pmod 4,  \quad b \equiv 1-(-1)^{\frac {N(n)-1}{4}}  \quad \pmod 4.
\end{align}

Recall that \cite[Theorem 6.9]{Lemmermeyer} the quartic reciprocity law states
that for two primary integers  $m, n \in \mz[i]$,
\begin{align*}
%% \label{quarticrec}
 \leg{m}{n}_4 = \leg{n}{m}_4(-1)^{((N(n)-1)/4)((N(m)-1)/4)}.
\end{align*}
  Also, the supplement laws to the quartic reciprocity law states that for $n=a+bi$ being primary,
\begin{align}
\label{2.05}
  \leg {i}{n}_4=i^{(1-a)/2} \qquad \mbox{and} \qquad  \hspace{0.1in} \leg {1+i}{n}_4=i^{(a-b-1-b^2)/4}.
\end{align}

    This implies that for any $n \equiv 1 \pmod {16}$, we have
\begin{align*}
%%\label{2.5}
  \ \leg {i}{n}_4=\leg {1+i}{n}_4=1.
\end{align*}
Hence the functions $n \rightarrow \leg {i}{n}_4$ and $n \rightarrow \leg {1+i}{n}_4$ can be regarded as Hecke characters $\pmod {16}$ of trivial infinite type. From this we see that $\psi_m$ is a quartic Hecke character of trivial infinite type modulo $16m$ for every $m \in \mz$. \newline

  Using the quartic residue symbols, we have the following classification of all the primitive quartic Dirichlet characters of conductor $q$ co-prime to $2$:
\begin{lemma}
\label{lemma:quarticclass}
 The primitive quartic Dirichlet characters of conductor $q$ coprime to $2$ such that their squares remain primitive are of the form $\chi_n:m \mapsto \leg{m}{n}_4$ for some $n \in \mz[i]$, $n \equiv 1 \pmod{(1+i)^3}$, $n$ square-free and not divisible by any rational primes, with norm $N(n) = q$.
\end{lemma}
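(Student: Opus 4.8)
The plan is to reduce the classification to a local analysis at each rational prime dividing $q$, via the Chinese Remainder Theorem, and then, prime by prime, to match the two Dirichlet characters of order $4$ modulo a split prime $p$ with the two quartic residue symbols attached to the primes of $\mz[i]$ above $p$. Throughout, $\chi$ denotes a character primitive modulo $q$ with $\chi^{4}=\chi_{0}$, $(q,2)=1$, and $\chi^{2}$ primitive modulo $q$.

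First I would pin down the admissible $q$ and the local shape of $\chi$. Since $(q,2)=1$, we have $(\mz/q\mz)^{\times}\cong\prod_{p^{a}\| q}(\mz/p^{a}\mz)^{\times}$, and $\chi$ is primitive modulo $q$ exactly when each local component $\chi_{p}$ is primitive modulo $p^{a}$. If $a\ge 2$, a character primitive modulo $p^{a}$ is nontrivial on the cyclic kernel of order $p$ of the map $(\mz/p^{a}\mz)^{\times}\to(\mz/p^{a-1}\mz)^{\times}$, so $p$ divides its order; since $\chi_{p}^{4}=\chi_{0}$ and $p$ is odd this cannot happen, and hence $q$ is squarefree. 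For each $p\mid q$, the hypothesis that $\chi^{2}$ also be primitive modulo $q$ says precisely that $\chi_{p}^{2}$ is nontrivial, so $\chi_{p}$ has order exactly $4$; as $(\mz/p\mz)^{\times}$ is cyclic of order $p-1$, this forces $4\mid p-1$, so every $p\mid q$ splits in $\mz[i]$, say $p=\pi_{p}\overline{\pi_{p}}$ with $\pi_{p}$ primary. The composite $\mz\to\mz[i]\to\mz[i]/(\pi_{p})$ induces an isomorphism $\mz/p\mz\cong\fief_{p}$, so $m\mapsto\leg{m}{\pi_{p}}_{4}$ is a Dirichlet character modulo $p$, of order exactly $4$ because $a\mapsto a^{(p-1)/4}$ surjects onto $\mu_{4}$ on $\fief_{p}^{\times}$, and hence primitive; for $m\in\mz$ its complex conjugate is $m\mapsto\leg{m}{\overline{\pi_{p}}}_{4}$, and the two are distinct since an order-$4$ character is not real-valued. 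Because a cyclic group of order $p-1$ with $4\mid p-1$ has exactly two elements of order $4$, the order-$4$ characters modulo $p$ are precisely $\chi_{\pi_{p}}$ and $\chi_{\overline{\pi_{p}}}$, so $\chi_{p}\in\{\chi_{\pi_{p}},\chi_{\overline{\pi_{p}}}\}$.

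Then I would assemble $n$: for each $p\mid q$ choose $\varpi_{p}\in\{\pi_{p},\overline{\pi_{p}}\}$ with $\chi_{p}=\chi_{\varpi_{p}}$ and set $n:=\prod_{p\mid q}\varpi_{p}$; by multiplicativity of the quartic residue symbol in its lower argument, $\chi=\chi_{n}$. One then checks that $n\equiv 1\pmod{(1+i)^{3}}$ (a product of primary elements is primary), that $n$ is squarefree in $\mz[i]$ (the $\varpi_{p}$ are pairwise non-associate), that $n$ is divisible by no rational prime (only one prime of $\mz[i]$ above each $p$ divides $n$, and $(1+i)\nmid n$), and that $N(n)=\prod_{p\mid q}N(\varpi_{p})=q$, which is the asserted form. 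For the converse, given any $n$ of the stated shape, factoring $n$ into primary split primes exhibits $\chi_{n}$ as a product of primitive order-$4$ Dirichlet characters to pairwise coprime prime moduli, so $\chi_{n}$ is a primitive quartic character modulo $N(n)$ whose square is again primitive. The step that I expect to require the most care is the identification, modulo each split prime $p$, of the two order-$4$ Dirichlet characters with the quartic symbols modulo $\pi_{p}$ and $\overline{\pi_{p}}$, together with the bookkeeping that primitivity of both $\chi$ and $\chi^{2}$ is equivalent to choosing exactly one prime of $\mz[i]$ over each rational prime dividing $q$; the remainder is routine character theory on cyclic groups and multiplicativity of the quartic symbol.
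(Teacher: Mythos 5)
Your proof is correct and complete; the paper itself gives no argument for this lemma (it only cites \cite[Section 2.2]{G&Zhao}), and your CRT reduction to the split primes $p\equiv 1\pmod 4$, matching the two order-$4$ characters mod $p$ with $\leg{\cdot}{\pi_p}_4$ and $\leg{\cdot}{\overline{\pi_p}}_4$, is the standard route and fills that gap. The only micro-detail worth adding is that $\overline{\pi_p}$ is primary whenever $\pi_p$ is (conjugation preserves the congruence classes defining primarity), so that $n=\prod_p\varpi_p$ is indeed $\equiv 1\pmod{(1+i)^3}$ regardless of which prime above $p$ is chosen.
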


The above result is given in \cite[Section 2.2]{G&Zhao}, but the correspondence given there is not exact.  We take this opportunity to thank Francesca Balestrieri and Nick Rome for some helpful discussions on this topic.

\subsection{The Gauss sums}
     For any $n \in  \mz[i]$, $(n,2)=1$, the quartic Gauss sum $g(n)$ is defined by
\begin{align*}
   g(n) = \sum_{x \bmod{n}} \leg{x}{n}_4 \widetilde{e}\leg{x}{n},
\end{align*}
    where $\widetilde{e}(z) =\exp \left( 2\pi i  (\frac {z}{2i} - \frac {\overline{z}}{2i}) \right)$. \newline

   The following well-known formula (see \cite[p. 195]{P}) holds for all $n$:
\begin{align}
\label{2.1}
   |g(n)|& =\begin{cases}
    \sqrt{N(n)} \qquad & \text{if $n$ is square-free}, \\
     0 \qquad & \text{otherwise}.
    \end{cases}
\end{align}

    More generally, we define for $n,k \in \mz[i], (n,2)=1$,
\begin{align*}
 g(k,n) = \sum_{x \bmod{n}} \leg{x}{n}_4 \widetilde{e}\leg{kx}{n}.
\end{align*}

   We note two properties of $g(r,n)$ that can be found in \cite{Diac}:
\begin{align}
\label{eq:gmult}
 g(rs,n) & = \overline{\leg{s}{n}_4} g(r,n), \quad (s,n)=1, \\
\label{2.03}
   g(r,n_1 n_2) &=\leg{n_2}{n_1}_4\leg{n_1}{n_2}_4g(r, n_1) g(r, n_2), \quad (n_1, n_2) = 1.
\end{align}

   We further define the Gauss sum $\tau(\chi)$ associated to a primitive Dirichlet character $\chi \pmod q$ by
\begin{equation*}
%%\label{2.08}
  \tau(\chi) =  \sum_{1 \leq x \leq q}\chi(x) e \left( \frac{x}{q} \right).
\end{equation*}

   When $\chi$ is a primitive quartic Dirichlet character identified with $\chi_n$ by Lemma \ref{lemma:quarticclass}, we
have (\cite[p. 894]{G&Zhao})
\begin{equation}
\label{tau}
 \tau(\chi_n) =   \leg {\overline{n}}{n}_4g(n).
\end{equation}

  Here we have $n\equiv 1 \pmod {(1+i)^3}$ and $n$ free of rational prime divisors. If we write $n=a+bi$ with $a, b \in \mz$, then we deduce from this that $(a, b)=1$ so that
\begin{align} \label{nbar/n}
  \leg {\overline{n}}{n}_4= \leg {a-bi}{a+bi}_4=\leg {2a}{a+bi}_4=\leg {2(-1)^{\frac {N(n)-1}{4}}}{a+bi}_4\leg {(-1)^{\frac {N(n)-1}{4}}a}{a+bi}_4.
\end{align}

   As $(-1)^{\frac {N(n)-1}{4}}a$ is primary according to \eqref{a&b}, we have by the quartic reciprocity law,
\begin{align} \label{nbar/n1}
   \leg {(-1)^{\frac {N(n)-1}{4}}a}{a+bi}_4=(-1)^{((N(a)-1)/4)((N(n)-1)/4)} \leg {a+bi}{a}_4= \leg {a+bi}{a}_4= \leg {bi}{a}_4= \leg {b}{a}_4 \leg {i}{a}_4= \leg {i}{a}_4,
\end{align}
   where the last equality follows from \cite[Proposition 9.8.5]{I&R}, which states that for $a, b \in \mz, (a, 2b)=1$ ,
\[  \leg {b}{a}_4=1. \]

   It follows from \eqref{2.05} that
\begin{align} \label{nbar/n2}
   \leg {i}{a}_4=i^{(1-(-1)^{\frac {N(n)-1}{4}}a)/2}=(-1)^{\frac {a^2-1}{8}}.
\end{align}

On the other hand, note that by the definition that
\begin{align} \label{nbar/n3}
   \leg {(-1)^{\frac {N(n)-1}{4}}}{a+bi}_4=(-1)^{\frac {N(n)-1}{4}\cdot \frac {N(n)-1}{4}}=(-1)^{\frac {N(n)-1}{4}}=\leg {-1}{n}_4.
\end{align}

   We then deduce that, putting together \eqref{nbar/n}, \eqref{nbar/n1}, \eqref{nbar/n2} and \eqref{nbar/n3},
\begin{align*}
  \leg {\overline{n}}{n}_4= \leg {-2}{n}_4(-1)^{\frac {a^2-1}{8}}=\begin{cases}
     \leg {-2i}{n}_4=\overline{ \leg {(-2i)^3}{n}}_4 \qquad & \text{if $\leg {-1}{n}_4=1$}, \\ \\
      i^{-1}\leg {-2i}{n}_4=i^{-1}\overline{\leg{(-2i)^3}{n}}_4 \qquad & \text{if $\leg {-1}{n}_4=-1$ }.
    \end{cases}
\end{align*}

Thus from this and \eqref{tau},
\begin{align}
\label{taug}
  \tau(\chi_n)=\begin{cases}
    \overline{ \leg {(-2i)^3}{n}}_4 g(n) \qquad & \text{if $\leg {-1}{n}_4=1$}, \\ \\
     i^{-1}\overline{\leg{(-2i)^3}{n}}_4 g(n) \qquad & \text{if $\leg {-1}{n}_4=-1$ }.
    \end{cases}
\end{align}

\subsection{The approximate functional equation}
    Let $G(s)$ be any even function which is holomorphic and bounded in the strip $-4<\Re(s)<4$ satisfying $G(0)=1$. It follows from \cite[Theorem 5.3]{iwakow} that we have the following approximate functional equation for Dirichlet $L$-functions.
\begin{prop}
\label{prop:AFE}
Let $\chi$ be a primitive Dirichlet character of conductor $q$. For any $\alpha \in \mc, j \in \{ \pm 1 \}$, let
\begin{equation}
\label{2.8}
 a_j=\frac {1-j}2, \quad  \epsilon(\chi) = i^{-a_{\chi(-1)}} q^{-1/2} \tau(\chi), \quad  X_{\alpha,j} = \left(\frac{q}{\pi}\right)^{-\alpha} \frac{\Gamma\left(\tfrac{1/2 + a_j- \alpha}{2}\right)}{\Gamma\left(\tfrac{1/2 + a_j + \alpha}{2}\right)}.
\end{equation}
  We define
\begin{equation*}
  V_{\alpha, j}(x) = \frac{1}{2\pi i} \int\limits_{(2)} \frac{G(s)}{s} \gamma_{\alpha, j}(s) x^{-s} \dif s, \quad \text{where} \quad \gamma_{\alpha, j}(s) = \pi^{-s/2} \frac{\Gamma\left(\tfrac{1/2 + a_j+\alpha+ s}{2}\right)}{\Gamma\left(\tfrac{1/2 + a_j + \alpha}{2}\right)}.
\end{equation*}
  Furthermore, let $A$ and $B$ be positive real numbers such that $AB = q$.  Then for any $|\Re(\alpha)| < 1/2$ we have
\begin{equation*}
%%\label{approxfunc}
L(1/2 + \alpha, \chi) = \sum_{m=1}^{\infty} \frac{\chi(m)}{m^{1/2 + \alpha}} V_{\alpha, \chi(-1)}\left(\frac{m}{A}\right) + \epsilon(\chi) X_{\alpha, \chi(-1)} \sum_{m=1}^{\infty} \frac{\overline{\chi}(m)}{m^{1/2 - \alpha}} V_{-\alpha, \chi(-1)}\left(\frac{m}{B}\right).
\end{equation*}
\end{prop}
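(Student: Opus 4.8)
The plan is to derive Proposition~\ref{prop:AFE} as a specialization of the general approximate functional equation [\,\cite{iwakow}, Theorem 5.3\,] applied to the Dirichlet $L$-function $L(s,\chi)$, so that the real content is unwinding the archimedean data and checking that the normalizations in \eqref{2.8} come out correctly. Recall that for a primitive character $\chi$ of conductor $q$ (non-principal in all cases occurring here, so that $L(s,\chi)$ is entire) the completed $L$-function
\begin{equation*}
\Lambda(s,\chi)=\left(\frac{q}{\pi}\right)^{(s+a_{\chi(-1)})/2}\Gamma\left(\frac{s+a_{\chi(-1)}}{2}\right)L(s,\chi)
\end{equation*}
satisfies $\Lambda(s,\chi)=\epsilon(\chi)\Lambda(1-s,\overline{\chi})$ with $\epsilon(\chi)=i^{-a_{\chi(-1)}}q^{-1/2}\tau(\chi)$ exactly as in \eqref{2.8}. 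Writing $j=\chi(-1)$ and $a=a_j$, one notes that $\gamma_{\alpha,j}(s)$ is precisely the quotient of archimedean factors $\pi^{-(1/2+\alpha+s)/2}\Gamma((1/2+\alpha+s+a)/2)$ at the shifted point over the same quantity at the base point $s=0$, so in particular $\gamma_{\alpha,j}(0)=1$.

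First I would start from the contour integral
\begin{equation*}
\frac{1}{2\pi i}\int_{(2)}\frac{G(s)}{s}\,\gamma_{\alpha,j}(s)\,A^{s}\,L\left(\tfrac12+\alpha+s,\chi\right)\dif s .
\end{equation*}
On the line $\Re(s)=2$ the Dirichlet series for $L(\tfrac12+\alpha+s,\chi)$ converges absolutely, so expanding it termwise and integrating identifies this integral with the first sum $\sum_{m\ge1}\chi(m)m^{-1/2-\alpha}V_{\alpha,j}(m/A)$ of the statement. Next I would move the line of integration to $\Re(s)=-2$; this is legitimate because $G$ is bounded and holomorphic in $-4<\Re(s)<4$ while the gamma quotient in $\gamma_{\alpha,j}(s)$ decays rapidly and $L(\tfrac12+\alpha+s,\chi)$ grows at most polynomially along verticals. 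Since $\Lambda(s,\chi)$ is entire, the integrand has a single pole in the strip, the simple pole of $1/s$ at $s=0$, with residue $L(\tfrac12+\alpha,\chi)$ because $G(0)=\gamma_{\alpha,j}(0)=1$.

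It then remains to treat the shifted integral on $\Re(s)=-2$. Here I would rewrite $\gamma_{\alpha,j}(s)A^{s}$ in terms of $\Lambda(\tfrac12+\alpha+s,\chi)$ times elementary factors (powers of $q$ and $\pi$ and the fixed base-point normalization), substitute $s\mapsto-s$, use $G(-s)=G(s)$, and apply the functional equation $\Lambda(\tfrac12+\alpha-s,\chi)=\epsilon(\chi)\Lambda(\tfrac12-\alpha+s,\overline{\chi})$. Expanding $\Lambda(\tfrac12-\alpha+s,\overline{\chi})$ and the Dirichlet series of $L(\tfrac12-\alpha+s,\overline{\chi})$ on the new line $\Re(s)=2$ and collecting the archimedean factors produces $\epsilon(\chi)X_{\alpha,j}\sum_{m\ge1}\overline{\chi}(m)m^{-1/2+\alpha}V_{-\alpha,j}(m/B)$, where one uses $AB=q$ to identify the length parameter with $B$ and checks that the residual gamma quotient together with the power of $q/\pi$ assembles into $X_{\alpha,j}=(q/\pi)^{-\alpha}\Gamma((1/2+a-\alpha)/2)/\Gamma((1/2+a+\alpha)/2)$. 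Adding the residue $L(\tfrac12+\alpha,\chi)$ and this second term yields the asserted identity, valid for $|\Re(\alpha)|<1/2$ so that both Dirichlet series and all gamma factors remain in their domains of convergence and holomorphy.

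The only point that needs genuine care — and where I would concentrate attention — is the bookkeeping of the archimedean factors: verifying that, once the base-point normalizations built into $\gamma_{\alpha,j}$ and $V_{\pm\alpha,j}$ are accounted for, the ratio of gamma functions thrown off by the functional equation is exactly $X_{\alpha,j}$ with the correct power of $q/\pi$, and that the root number emerges as $\epsilon(\chi)$ rather than $\overline{\epsilon(\chi)}$. This is a purely mechanical computation with no conceptual obstacle, and it matches the specialization of [\,\cite{iwakow}, Theorem 5.3\,] with $s_{0}=\tfrac12+\alpha$ and the free parameter there chosen so that the two approximating lengths are $A$ and $B=q/A$; hence nothing beyond that reference is required.
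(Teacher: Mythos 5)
Your proposal is correct and matches the paper's treatment: the paper simply invokes \cite[Theorem 5.3]{iwakow}, and your contour-shift argument (expand on $\Re(s)=2$, pass the pole at $s=0$ using that $\Lambda(s,\chi)$ is entire, substitute $s\mapsto -s$, apply the functional equation, and check that the leftover archimedean factors assemble into $X_{\alpha,j}$ with $B=q/A$) is exactly the standard proof of that theorem specialized to Dirichlet $L$-functions. The bookkeeping you describe does come out as claimed, so no gap.
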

\noindent For $\alpha = 0$ we set $\gamma_1=\gamma_{0,1}$, $\gamma_{-1}=\gamma_{0,-1}$, $V_1=V_{0,1}$, $V_{-1}=V_{0,-1}$. \newline

With a suitable $G(s)$ (for example $G(s)=e^{s^2}$), we have for any $c>0$ (see \cite[Proposition 5.4]{HIEK}):
\begin{align*}
%%\label{2.15}
  V_{\alpha,j} \left(\xi \right) \ll \left( 1+\frac{\xi}{1+|\alpha|} \right)^{-c}.
\end{align*}

   On the other hand, when $G(s)=1$, we have (see \cite[Lemma 2.1]{sound1}) that $V_{\pm 1}(\xi)$ is real-valued and smooth on $[0, \infty)$ and for the $j$-th derivative of $V_{\pm 1}(\xi)$,
\begin{equation} \label{2.07}
      V_{\pm 1}\left (\xi \right) = 1+O(\xi^{1/2-\epsilon}) \; \mbox{for} \; 0<\xi<1   \quad \mbox{and} \quad V^{(j)}_{\pm 1}\left (\xi \right) =O(e^{-\xi}) \; \mbox{for} \; \xi >0,\; j \geq 0.
\end{equation}

\subsection{Analytic behavior of Dirichlet series associated with Gauss sums}
\label{section: smooth Gauss}
  For any Hecke character $\chi \pmod {16}$ of trivial infinite type, we let
\begin{align*}
%%\label{h}
   h(r,s;\chi)=\sum_{\substack{(n,r)=1 \\ n \equiv 1 \bmod{(1+i)^3}}}\frac {\chi(n)g(r,n)}{N(n)^s}.
\end{align*}

     The following lemma gives the analytic behavior of $h(r,s;\chi)$ on $\Re(s) >1$.
\begin{lemma}{\cite[Lemma 2.5]{G&Zhao1}}
\label{lem1} The function $h(r,s;\chi)$ has meromorphic continuation to the entire complex plane. It is holomorphic in the
region $\sigma=\Re(s) > 1$ except possibly for a pole at $s = 5/4$. For any $\varepsilon>0$, letting $\sigma_1 = 3/2+\varepsilon$, then for $\sigma_1 \geq \sigma \geq \sigma_1-1/2$, $|s-5/4|>1/8$, we have for $t=\Im(s)$,
\[ h(r,s;\chi) \ll N(r)^{(\sigma_1-\sigma)/2+\varepsilon}(1+t^2)^{3(\sigma_1-\sigma)/2+\varepsilon}. \]
 Moreover, the residue satisfies
\[ \mathop{\mathrm{Res}}_{s=5/4}h(r,s;\chi) \ll N(r)^{1/8+\varepsilon}. \]
\end{lemma}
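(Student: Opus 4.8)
The plan is to reduce $h(r,s;\chi)$ to a twisted quartic Gauss-sum Dirichlet series, to read off its meromorphic continuation, pole and functional equation from the theory of quartic metaplectic forms, and then to obtain the estimate in the strip by convexity. First I would exploit the coprimality condition $(n,r)=1$ built into the sum: since $g(1,n)=g(n)$, identity \eqref{eq:gmult} gives $g(r,n)=\overline{\leg{r}{n}_4}\,g(n)$ whenever $(r,n)=1$, and by \eqref{2.1} the summand vanishes unless $n$ is square-free. The quartic reciprocity law and the supplement laws \eqref{2.05} then show that on primary $n$ coprime to $r$ the function $n\mapsto\overline{\leg{r}{n}_4}$ agrees with $\overline{\leg{n}{r_0}_4}$ --- where $r_0$ is the fourth-power-free part of $r$ --- up to a factor depending only on $n\bmod16$; hence $n\mapsto\chi(n)\overline{\leg{r}{n}_4}$ is a Hecke character $\psi$ of trivial infinite type to a modulus dividing $16r_0$, and
\begin{equation*}
 h(r,s;\chi)=\sum_{\substack{n\equiv1\bmod{(1+i)^3},\ (n,r)=1\\ n\ \text{square-free}}}\frac{\psi(n)\,g(n)}{N(n)^s}.
\end{equation*}
Reinstating the terms with $(n,r)>1$ via \eqref{2.03} multiplies this by a finite Euler product over the primes dividing $r$ that is holomorphic and $\ll N(r)^{\varepsilon}$ throughout $\Re s>1$, so it suffices to analyse the completed series $\sum_{(n,2)=1}\psi(n)\,g(n)\,N(n)^{-s}$ taken over all primary $n$.

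Next I would treat the analytic heart of the matter, the continuation of this completed series. Unfolding $g(n)=\sum_{x\bmod n}\leg{x}{n}_4\widetilde{e}(x/n)$ and interchanging summations rewrites it as a combination of Hecke $L$-functions of the characters $\psi\cdot\leg{\cdot}{\bullet}_4$ woven together by the additive characters $\widetilde{e}(x/n)$; equivalently, up to elementary local factors it is the Dirichlet series attached to the Fourier coefficients of a quartic metaplectic Eisenstein series over $\mq(i)$. Either way one gets meromorphic continuation to all of $\comc$; holomorphy in $\Re s>1$ apart from a possible simple pole at $s=1+\tfrac14=\tfrac54$, present only when $\psi$ is principal in the appropriate sense --- this being the reason for the word ``possibly''; a functional equation of the shape $s\mapsto c_0-s$ for an explicit $c_0$, with a quotient of Gamma factors and analytic conductor $\ll N(r)(1+|t|)^{O(1)}$; and, from the explicit residue that the functional equation provides, the bound $\operatorname{Res}_{s=5/4}h(r,s;\chi)\ll N(r)^{1/8+\varepsilon}$.

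For the estimate in the strip I would interpolate. On $\Re s=\sigma_1=\tfrac32+\varepsilon$ absolute convergence gives $h(r,s;\chi)\ll N(r)^{\varepsilon}(1+t^2)^{\varepsilon}$, while on $\Re s=\tfrac34-\varepsilon$ the dual series in the functional equation is absolutely convergent, so there (by Stirling applied to the Gamma factors) $h(r,s;\chi)\ll N(r)^{\beta}(1+t^2)^{C}$ for fixed constants $\beta,C$. Multiplying $h(r,s;\chi)$ by $(s-\tfrac54)$ to remove the possible pole, applying the Phragm\'en--Lindel\"of principle across the intervening strip, and dividing back in the region $|s-\tfrac54|>\tfrac18$ then yields a bound in which the exponents of $N(r)$ and of $(1+t^2)$ interpolate linearly to $(\sigma_1-\sigma)/2$ and $3(\sigma_1-\sigma)/2$, which is exactly the claimed estimate for $\sigma_1-\tfrac12\le\sigma\le\sigma_1$. (Alternatively, on $\Re s=1+\varepsilon$ one could bound the relevant smoothly truncated Dirichlet polynomial directly by means of the quartic large sieve of Lemmas~\ref{quartls} and~\ref{quarticlargesieve}.)

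I expect the genuine difficulty to be concentrated in the second step: establishing the meromorphic continuation, the precise location and nature of the pole at $\tfrac54$, the functional equation, and --- crucially for the application to the first moment --- the uniformity in the conductor $N(r)$. All of this rests on the metaplectic machinery in the style of Kubota and Patterson, developed in the quartic and number-field settings in \cite{Diac} and in the analysis underlying \cite{G&Zhao1}; by comparison the reciprocity bookkeeping of the first step and the convexity argument of the last step are routine.
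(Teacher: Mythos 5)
Note first that this paper does not actually prove the lemma: it is imported verbatim from \cite{G&Zhao1} (Lemma 2.5 there), so there is no in-paper argument to measure you against, only the citation. Your outline is nevertheless the right one and, as far as one can tell, the route followed in the cited source: reduce $h(r,s;\chi)$, via $g(r,n)=\overline{\leg{r}{n}_4}\,g(n)$ for $(n,r)=1$ and the vanishing of $g(n)$ off squarefree $n$, to (a finite combination of) Dirichlet series $\sum_n \psi(n)g(n)N(n)^{-s}$ with $\psi$ a Hecke character of modulus dividing $16r$; obtain the continuation, the possible simple pole at $s=5/4$, the functional equation and the residue bound from the Kubota--Patterson theory of metaplectic (here biquadratic) theta functions; and finish with Phragm\'en--Lindel\"of after removing the pole. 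Two places in your sketch are glibber than they should be. First, ``multiplies this by a finite Euler product over the primes dividing $r$'' is not literally available: because of the twisted multiplicativity \eqref{2.03}, passing between the sum restricted to $(n,r)=1$ and the completed sum produces a finite \emph{linear combination} of series of the same shape attached to the modified characters $\psi\cdot\leg{e}{\cdot}_4\leg{\cdot}{e}_4$ for squarefree $e\mid r$, not a single series times an Euler factor; this is harmless only because the metaplectic input is uniform in such twists, and that uniformity should be stated. Second, the convexity step as written yields \emph{some} linearly interpolating exponents; to land on precisely $N(r)^{(\sigma_1-\sigma)/2+\varepsilon}(1+t^2)^{3(\sigma_1-\sigma)/2+\varepsilon}$ one must actually compute the conductor and gamma factors of the functional equation on the dual line, which is exactly the content you defer to the metaplectic machinery along with the residue identification giving $N(r)^{1/8+\varepsilon}$. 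With those two points made precise, your argument is the standard proof of this lemma, and in substance it defers to the same external machinery that the paper itself defers to by citing \cite{G&Zhao1}.
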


\subsection{The large sieve with quartic Dirichlet characters}
The following large sieve inequality for quadratic residue symbols is needed in the proof.
\begin{lemma}{\cite[Theorem 1]{Onodera}} \label{quartls}
Let $M,N$ be positive integers, and let $(a_n)_{n\in \mathbb{N}}$ be an arbitrary sequence of complex numbers, where $n$ runs over $\mz[i]$. Then we have for any $\varepsilon > 0$,
\begin{equation*}
%%\label{eq:quartic}
 \sumprime_{\substack{m \in \mz[i] \\ (m,2)=1 \\ N(m) \leq M}} \left| \ \sumprime_{\substack{n \in \mz[i] \\ (n,2)=1 \\N(n) \leq N}} a_n \leg{n}{m} \right|^2
 \ll_{\varepsilon} (MN)^{\varepsilon}(M + N) \sum_{N(n) \leq N} |a_n|^2,
\end{equation*}
  where $\sum'$ means that the sum runs over square-free elements of $\mz[i]$ and $(\frac
{\cdot}{m})$ is the quadratic residue symbol.
\end{lemma}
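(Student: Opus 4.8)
This is the Gaussian-integer analogue of Heath--Brown's large sieve for real character sums, and the plan is to transcribe his argument to the field $\mq(i)$.

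First I would perform a sequence of reductions. By the duality principle for bilinear forms, the asserted inequality is equivalent to the transposed estimate
\[
\sumprime_{\substack{n \in \mz[i] \\ (n,2)=1 \\ N(n)\le N}}\ \Bigl|\ \sumprime_{\substack{m \in \mz[i] \\ (m,2)=1 \\ N(m)\le M}} b_m \leg nm \Bigr|^2 \ll_{\varepsilon} (MN)^{\varepsilon}(M+N)\sumprime_{N(m)\le M}|b_m|^2,
\]
and, by the quadratic reciprocity law in $\mz[i]$ — obtained by squaring the quartic reciprocity law and supplement laws of Section~\ref{sec2.4} — one has $\leg nm = \epsilon\,\leg mn$ with $\epsilon\in\{\pm 1\}$ depending only on $n$ and $m$ modulo $(1+i)^3$. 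Splitting $n$ and $m$ into the finitely many residue classes modulo $(1+i)^3$ and invoking the triangle and Cauchy--Schwarz inequalities, the two bilinear forms above majorise one another up to a bounded constant together with an interchange of the roles of $M$ and $N$; hence one may assume $M\ge N$. Next, using $\mu_{[i]}^2(m)=\sum_{\ell^2\mid m}\mu_{[i]}(\ell)$ to remove the square-freeness of $m$ at the price of a short outer sum over $\ell$ with $N(\ell)\le(2M)^{1/2}$, and replacing the sharp cutoff $N(m)\le M$ by a fixed non-negative smooth function $\Phi$ on $\mc\cong\mr^2$ that majorises the characteristic function of the disc $N(z)\le M$ and is supported in a bounded dilate of it, the task becomes to bound $\sum_m\Phi(m)\bigl|\sumprime_n a_n\leg nm\bigr|^2$, with $m$ now ranging over all of $\mz[i]$ coprime to $2$.

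Opening the square gives $\sum_{n_1,n_2}a_{n_1}\overline{a_{n_2}}\sum_m\Phi(m)\leg{n_1n_2}{m}$. Writing $g=(n_1,n_2)$ and $n_1n_2=g^2L$ with $L$ square-free, the symbol $\leg{g^2}{m}$ is the indicator of the condition $(m,g)=1$, so the inner sum is $\sum_{(m,g)=1}\Phi(m)\leg Lm$. The diagonal terms, where $L$ is a unit and therefore $n_1=n_2$, contribute $\sum_n|a_n|^2\sum_{(m,n)=1}\Phi(m)\ll M\sum_n|a_n|^2$, which is precisely the expected main term. For the off-diagonal terms, where $L$ is a square-free non-unit, I would flip $\leg Lm=\epsilon\leg mL$ via reciprocity, split $m$ and $L$ into residue classes modulo $(1+i)^3$, and apply Poisson summation to the sum of $\Phi(m)\leg mL$ over $m$ in a fixed residue class modulo $N(L)(1+i)^3$: since $\Phi$ is smooth and concentrated on $N(m)\asymp M$, the dual sum is, up to a negligible error, supported on frequencies with $N(k)\ll N(L)/M$, and its $k=0$ term is a multiple of the complete character sum $\sum_{m\bmod L}\leg mL$, which vanishes because $L$ is not a square. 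What remains is a sum over $k\ne 0$ of twisted Gauss sums, controlled by $|g(k,L)|\ll N(L)^{1/2}(k,L)^{1/2}$, a consequence of \eqref{2.1} and \eqref{2.03}; in particular the off-diagonal terms are negligible unless $N(L)$ exceeds a fixed multiple of $M$.

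Carried out in this way, the estimates above yield only a bound for the off-diagonal contribution of the shape $M+N^2$ (this is what one also gets by expanding $\leg mL$ into additive characters and applying the large sieve over $\mz[i]$ with its Farey-spaced frequencies), which falls short of the optimal $M+N$ whenever $N\le M<N^2$. Closing this gap is the one genuinely delicate point, and it is handled by Heath--Brown's recursive device: one opens the Gauss sums $g(k,L)$ back into multiplicative character sums, interchanges the order of summation, and recognises the resulting quantity as another quadratic bilinear form of exactly the same type but with strictly smaller parameters; inserting into this identity the bound one is trying to establish produces a self-improving recursion whose solution is the claimed estimate. (This is the same mechanism exploited by Baier and Young \cite{B&Y} in the cubic case.) All remaining ingredients — the reciprocity bookkeeping, the Poisson summation, and the divisor estimates needed to execute the sums over $n_1,n_2,\ell,k$ — are routine, and the full details are carried out in \cite{Onodera}.
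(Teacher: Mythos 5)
The paper offers no proof of this lemma at all: it is imported verbatim as Theorem~1 of Onodera's large sieve for quadratic residue symbols in $\mz[i]$, which in turn is the $\mq(i)$-analogue of Heath--Brown's mean value estimate for real character sums. So there is no internal argument to compare yours against; what you have written is a reconstruction of the strategy of the cited source, and as a strategy it is accurate. The reductions (duality, reciprocity to symmetrise $M$ and $N$, removal of square-freeness, smoothing), the diagonal/off-diagonal split after opening the square, the vanishing of the zero frequency because $\leg{\cdot}{L}$ is nontrivial for non-square $L$, and the use of \eqref{eq:gmult}-type identities to turn the dual sum back into a character sum in the new variable are all the right moves, and you correctly locate the crux: the naive execution only yields $M+N^2$ (equivalently $\min(M+N^2,\,N+M^2)$ after symmetrisation), which is weaker than $M+N$ precisely in the range $N\le M<N^2$ that matters.

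The one genuine shortfall is that the step you yourself flag as "the one genuinely delicate point" is asserted rather than carried out. Saying that the dual expression "is another quadratic bilinear form of the same type with strictly smaller parameters" and that "inserting the bound one is trying to establish produces a self-improving recursion whose solution is the claimed estimate" is not yet a proof: one must check that the new coefficients (which involve the Gauss sums $g(L)$ and the archimedean weights coming from Poisson summation) are controlled in $\ell^2$ by $\sum|a_n|^2$, that the new length is genuinely $O(N^{2}/M)$ after the smooth weight is accounted for, and that the recursion actually closes, i.e.\ that iterating $\Sigma(M,N)\ll (M+N)\|a\|^2 + (\text{factor})\cdot\Sigma(N, cN^2/M)$ terminates with the stated exponent and only an $(MN)^{\varepsilon}$ loss. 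This is where essentially all of the work in Heath--Brown's and Onodera's papers lies, and it is the part that can fail if any of the intermediate losses is a positive power rather than $(MN)^{\varepsilon}$. Two smaller points: squaring the quartic reciprocity and supplement laws of Section~\ref{sec2.4} shows the quadratic symbol is symmetric for primary arguments, but the unit and $(1+i)$ contributions depend on residues modulo a slightly higher power of $(1+i)$ than $(1+i)^3$ (compare \eqref{2.05}, which involves $a\bmod 8$); and the Gauss sum facts you invoke are the quadratic analogues of \eqref{2.1} and \eqref{2.03}, not those formulas themselves. Neither affects the architecture, but for a self-contained proof both would need to be stated and checked.
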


    Besides the notation $C_1(M,Q)$ mentioned in the Introduction, we also recall the norm $C_2(M,Q)$ defined in \cite[p. 907]{G&Zhao}. Notice that for square-free $n \in \mz[i]$ satisfying $(n,2)=1$, the square of the quartic symbol $\leg {\cdot }{n}_4^2$ becomes the quadratic residue symbol $\leg {\cdot }{n}$. Thus, using Lemma \ref{quartls} in \cite[(39)]{G&Zhao} and proceed to the estimate \cite[(40)]{G&Zhao}, we see that the estimation given in \cite[(24)]{G&Zhao} can be replaced by
\begin{equation}
\label{C2e1}
  C_2(M,Q) \ll (QM)^{\varepsilon}\left(M +Q^{3/2} \right),
\end{equation}
  As we have by \cite[(23)]{G&Zhao} that $C_1(M, Q) \leq C_2(M,Q)$, we see that the bound given in \eqref{C2e1} is also valid for $C_1(M,Q)$, which leads to the following large sieve inequality for quartic Dirichlet characters.
\begin{lemma} \label{quarticlargesieve}
Let $(a_m)_{m\in \mathbb{N}}$ be an
arbitrary sequence of complex numbers. Then
\begin{equation*}
\begin{split}
 \sum\limits_{\substack{Q<q\le 2Q}} \ \sumstar\limits_{\substack{\chi \bmod q\\ \chi^4=\chi_0}} &
\left| \ \sumstar\limits_{\substack{M<m\le 2M}} a_m
\chi(m)\right|^2\\
\ll &
(QM)^{\varepsilon}\min\left\{Q^{3/2}+M,Q^{5/4}+Q^{1/2}M,Q^{7/6}+Q^{2/3}M,
Q+Q^{1/3}M^{5/3}+M^{7/3}\right\}
\sumstar\limits_{\substack{M<m\le 2M}}\left| a_m \right|^2,
\end{split}
\end{equation*}
where the asterisk on the sum over $\chi$ restricts the sum to
primitive characters whose square remain primitive and the asterisks attached to the sum over $m$
indicates that $m$ runs over square-free integers.
\end{lemma}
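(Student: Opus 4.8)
The plan is to derive the stated inequality by combining the estimates of \cite{G&Zhao} for the large-sieve norms $C_1(M,Q)$ and $C_2(M,Q)$ with one new ingredient, namely the sharpening of the bound for $C_2(M,Q)$ that comes from Onodera's quadratic large sieve (Lemma~\ref{quartls}). First I would use Lemma~\ref{lemma:quarticclass} to replace the sum over primitive quartic characters $\chi\bmod q$ with $\chi^2$ primitive and $Q<q\le 2Q$ by a sum over square-free primary $n\in\mz[i]$ with no rational prime divisor and $Q<N(n)\le 2Q$, the character being $m\mapsto\leg{m}{n}_4$; this is exactly the shape of the bilinear form controlled by $C_1(M,Q)$ in \cite{G&Zhao}, so that the left-hand side is $\ll C_1(M,Q)\,\sumstar_{M<m\le 2M}|a_m|^2$, and one also has $C_1(M,Q)\le C_2(M,Q)$ by \cite[(23)]{G&Zhao}. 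Restricting $q$ and $m$ to dyadic blocks only decreases the sum, so no loss is incurred, and it suffices to bound $C_1(M,Q)$ by each of the four quantities in the minimum.

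Three of these bounds,
\[
C_1(M,Q)\ll (QM)^{\varepsilon}(Q^{5/4}+Q^{1/2}M),\quad C_1(M,Q)\ll (QM)^{\varepsilon}(Q^{7/6}+Q^{2/3}M),\quad C_1(M,Q)\ll (QM)^{\varepsilon}(Q+Q^{1/3}M^{5/3}+M^{7/3}),
\]
I would quote directly from \cite{G&Zhao}; the first is related to the pole at $s=5/4$ of the Dirichlet series attached to quartic Gauss sums (Lemma~\ref{lem1}), while the other two rest on the recursive and Poisson-summation arguments there. The fourth bound, $Q^{3/2}+M$, is the new one and is established at the level of $C_2(M,Q)$. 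Recalling the definition of $C_2(M,Q)$ from \cite[p.~907]{G&Zhao} and following that argument up to \cite[(39)]{G&Zhao}, one is reduced to bounding a bilinear form in which the inner coefficient sequence is twisted by $\leg{\cdot}{n}$ over square-free $n\in\mz[i]$ with $(n,2)=1$; here one uses that $\leg{\cdot}{n}_4^2=\leg{\cdot}{n}$ for such $n$. In place of the estimate used to reach \cite[(24)]{G&Zhao}, I would invoke Lemma~\ref{quartls}, which is sharp in that it carries no $(MN)^{2/3}$-type term, and then carry this through to \cite[(40)]{G&Zhao}; the outcome is $C_2(M,Q)\ll (QM)^{\varepsilon}(M+Q^{3/2})$, and by $C_1\le C_2$ the same bound holds for $C_1(M,Q)$. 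Taking the minimum of the four bounds and multiplying by $\sumstar_{M<m\le 2M}|a_m|^2$ then yields the lemma.

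The main obstacle is the middle step: one has to check that the bilinear form occurring at \cite[(39)]{G&Zhao} genuinely satisfies the hypotheses of Lemma~\ref{quartls} — that the moduli there are square-free elements of $\mz[i]$ coprime to $2$, that the coefficients in the inner sum have the $\ell^2$-mass needed to match the normalization in Lemma~\ref{quartls}, and that the further constraints present in our problem (coprimality with $2$, absence of rational prime divisors) only shrink the range and so cost nothing. Once this bookkeeping is in place, the remainder is a direct substitution into the estimates already contained in \cite{G&Zhao}, together with the elementary fact that a dyadic sum is dominated by the full one.
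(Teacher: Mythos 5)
Your proposal follows essentially the same route as the paper: the three bounds $Q^{5/4}+Q^{1/2}M$, $Q^{7/6}+Q^{2/3}M$ and $Q+Q^{1/3}M^{5/3}+M^{7/3}$ are quoted from \cite{G&Zhao}, and the new term $Q^{3/2}+M$ is obtained by inserting Onodera's quadratic large sieve (Lemma~\ref{quartls}) into the estimation of $C_2(M,Q)$ at \cite[(39)--(40)]{G&Zhao}, using that $\leg{\cdot}{n}_4^2=\leg{\cdot}{n}$ for square-free $n$ coprime to $2$, and then invoking $C_1(M,Q)\le C_2(M,Q)$. This is exactly the argument given in the paper, so the proposal is correct.
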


\section{Proof of Theorem \ref{firstmoment}}
  We start by setting
\begin{equation*}
\mathcal{M} := \sum_{(q,2)=1}\;  \sumstar_{\substack{\chi \bmod{q} \\ \chi^4 = \chi_0}} L(\half, \chi) w\leg{q}{Q}.
\end{equation*}

Applying Proposition~\ref{prop:AFE} (the approximate functional equation) with $G(s)=1, A_q B = q$ yields $\mathcal{M} = \mathcal{M}_1 + \mathcal{M}_2$, where
\begin{align*}
 \mathcal{M}_1 &= \sum_{(q,2)=1}\;  \sumstar_{\substack{\chi \bmod{q} \\ \chi^4 = \chi_0}}\ \sum_{m=1}^{\infty} \frac{\chi(m)}{\sqrt{m}} V_{\chi(-1)}\leg{m}{A_q} w\left(\frac{q}{Q}\right), \\
 \mathcal{M}_2 &= \sum_{(q,2)=1}\;  \sumstar_{\substack{\chi \bmod{q} \\ \chi^4 = \chi_0}} \epsilon(\chi)\sum_{m=1}^{\infty} \frac{\overline{\chi}(m)}{\sqrt{m}} V_{\chi(-1)}\leg{m}{B} w\left(\frac{q}{Q}\right).
\end{align*}

For each primitive quartic Dirichlet character $\chi$  whose square remains primitive, we write it as $\chi_n$ via the correspondence given in Lemma \ref{lemma:quarticclass}. It then follows from \eqref{tau}, \eqref{2.8} and this correspondence that
\begin{align*}
 \epsilon(\chi_n) = \epsilon(\chi) = i^{-a_{\chi_n(-1)}} N(n)^{-1/2}\left(\frac{\bar{n}}{n}\right)_4g(n).
\end{align*}

The above allows us to further decompose $M_1$ and $M_2$ as $\mathcal{M}_1 =\mathcal{M}^+_1 +\mathcal{M}^-_1$ and $\mathcal{M}_2 =\mathcal{M}^+_2 +\mathcal{M}^-_2$, where
\begin{align*}
 \mathcal{M}^{\pm}_1 &= \sumprimeprime_{\substack {n \equiv 1 \bmod (1+i)^3}}\frac {1 \pm \chi_n(-1)}{2}\ \sum_{m=1}^{\infty} \frac{\chi_n(m)}{\sqrt{m}}  V_{\chi_n(-1)}\leg{m}{A_n} w\left(\frac{N(n)}{Q}\right), \\
 \mathcal{M}^{\pm}_2 &= \sumprimeprime_{\substack {n \equiv 1 \bmod (1+i)^3}} \frac {1 \pm \chi_n(-1)}{2} \epsilon(\chi_n) \sum_{m=1}^{\infty} \frac{\overline{\chi}_n(m)}{\sqrt{m}}  V_{\chi_n(-1)}\leg{m}{B} w\left(\frac{N(n)}{Q}\right).
\end{align*}
Here $\sum''$ means that the sum runs over square-free elements $n \in \mathbb{Z}[i]$ that have no rational prime divisors.
  Also, the parameters $A_n, B$ sastify that $A_nB=N(n)$. We now introduce another parameter $A$ defined by $AB=Q$ and we note that we have $A_n = A N(n)/Q \asymp A$ for all $n$ under consideration because of the compact support of $w$. \newline

It remains to evaluate $\mathcal{M}^{\pm}_1$ and $\mathcal{M}^{\pm}_2$.  As the arguments are similar, we will only evaluate $\mathcal{M}^{+}_1$ and $\mathcal{M}^{+}_2$ in what follows. We summarize the results in the following lemma.
\begin{lemma}
\label{lem2}
 We have
\begin{align}
\label{eq:M1estimate}
 \mathcal{M}^{\pm}_1 =& \frac 12 C Q \widetilde{w}(1) + O \left( Q^{1/2 + \varepsilon} A^{1/2+ \varepsilon} + Q A^{-1/4 + \varepsilon} \right), \\
 \label{eq:M2bound}
 \mathcal{M}^{\pm}_2 \ll & Q^{3/4 + \varepsilon} B^{3/4 + \varepsilon},
\end{align}
where the constant $C$ is given explicitly in \eqref{eq:c}.
\end{lemma}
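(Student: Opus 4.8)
The plan is to treat $\mathcal{M}^{+}_1$ as the diagonal main term plus an off-diagonal contribution, and $\mathcal{M}^{+}_2$ as a pure error term coming from the dual sum in the approximate functional equation. For $\mathcal{M}^{+}_1$, I would first open up the condition $\tfrac{1+\chi_n(-1)}{2}$, which detects $\chi_n(-1)=1$; combined with the quartic supplement laws this amounts to a congruence condition on $n \bmod 16$ (or on the real and imaginary parts of $n$ modulo small powers of $2$), so that the sum over $n$ becomes a sum over primary square-free $n$, free of rational prime divisors, lying in a fixed union of residue classes. The main term arises from $m=1$, where $\chi_n(1)=1$ and $V_{\chi_n(-1)}(1/A_n) = V_{\pm 1}(1/A_n) = 1 + O(A_n^{-1/2+\varepsilon})$ by \eqref{2.07}. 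So I would extract
\begin{equation*}
  \sumprimeprime_{\substack{n\equiv 1\bmod (1+i)^3 \\ \chi_n(-1)=1}} w\!\left(\frac{N(n)}{Q}\right)
\end{equation*}
as the leading piece. Counting primary square-free $n$ with no rational prime divisors, in fixed residue classes mod $16$, weighted by $w(N(n)/Q)$, is a standard lattice-point/Perron argument: one writes the relevant Dirichlet series $\sum_n \mu^2_{[i]}(n) (\ldots) N(n)^{-s}$, which factors as $\zeta_{\mq(i)}(s)$ times an Euler product absolutely convergent for $\Re(s)>1/2$ (after removing the contribution of rational primes and the ramified prime), pick up the simple pole at $s=1$ whose residue, together with the Mellin transform $\widetilde{w}(1)$, produces $\tfrac12 C Q\widetilde{w}(1)$ with $C$ as in \eqref{eq:c}, and shifts the contour to get a power-saving error absorbed into the stated bound. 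The terms $m\geq 2$ in $\mathcal{M}^{+}_1$ give an off-diagonal sum $\sum_{m\geq 2} m^{-1/2} \sum_n \chi_n(m) V_{\pm1}(m/A_n) w(N(n)/Q)$, where by quartic reciprocity $\chi_n(m) = \leg{m}{n}_4$ is, up to explicit unit factors depending on $n,m\bmod 16$, a character in $m$ times a character $\leg{n}{m}_4$ in $n$; I would detect the smooth weights by Mellin inversion in both $m$ and $N(n)$, and bound the resulting character sum over $n$ by the large sieve — here, crucially, using the bound $h(r,s;\chi)$ from Lemma \ref{lem1} and/or the quartic large sieve Lemma \ref{quarticlargesieve} — after which the $m$-sum converges because $m/A_n$ forces $m \ll A^{1+\varepsilon}$. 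Optimizing gives the two error terms $Q^{1/2+\varepsilon}A^{1/2+\varepsilon}$ (large-sieve/off-diagonal) and $QA^{-1/4+\varepsilon}$ (tail of $m=1$ approximation, i.e. the $O(A_n^{-1/2})$ correction summed over $\asymp Q$ values of $n$, together with the secondary poles).

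For $\mathcal{M}^{+}_2$, there is no main term: $\epsilon(\chi_n) = i^{-a_{\chi_n(-1)}} N(n)^{-1/2}\leg{\bar n}{n}_4 g(n)$, and by \eqref{taug} this replaces $\leg{\bar n}{n}_4 g(n)$ by $\overline{\leg{(-2i)^3}{n}}_4 g(n)$ up to a unit. So $\mathcal{M}^{+}_2$ is essentially
\begin{equation*}
  \sum_{m\geq 1} \frac{1}{m^{1/2} N(n)^{1/2}} \sumprimeprime_{n} (\text{unit}) \, \overline{\leg{(-2i)^3}{n}}_4\, \overline{\chi_n(m)}\, g(n)\, V_{\pm1}\!\left(\frac{m}{B}\right) w\!\left(\frac{N(n)}{Q}\right),
\end{equation*}
and since $\overline{\chi_n(m)} = \overline{\leg{m}{n}_4}$, the factor $\overline{\chi_n(m)} g(n)$ combines into $g(k n\text{-piece})$-type Gauss sums via \eqref{eq:gmult}: writing $m = m_1 m_2^2$ with $m_1$ squarefree, $\overline{\leg{m}{n}_4} g(n) = \overline{\leg{m_1}{n}_4} g(n) = g(m_1, n) / (\text{stuff})$ up to the multiplicativity relations \eqref{eq:gmult}, \eqref{2.03}. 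After this identification, for each fixed $m$ the inner sum over $n$ is a smoothed version of $\sum_n \chi(n) g(r,n) N(n)^{-s}$-type object, i.e. it is governed by $h(r,s;\chi)$ from Lemma \ref{lem1}: I would write $w(N(n)/Q)$ by Mellin inversion, move the contour to $\Re(s) = 3/2+\varepsilon$ crossing the possible pole at $s=5/4$ (whose residue is $\ll N(r)^{1/8+\varepsilon}$), and use the convexity-type bound for $h(r,s;\chi)$ on the line. Summing over $m \ll B^{1+\varepsilon}$ (the tail being negligible by the rapid decay of $V_{\pm1}$) and over the two pieces (pole term and shifted-contour term), the worst contribution is of size $Q^{3/4+\varepsilon}B^{3/4+\varepsilon}$: the $Q^{1/2}$ from $N(n)^{-1/2}\cdot Q$ times the pole at $5/4$ scaling like $Q^{5/4}/Q = Q^{1/4}$ is balanced against the $m$-sum, and bookkeeping the $r = m_1$ dependence ($N(r)^{1/8}$, summed against $m^{-1/2}$ up to $m\asymp B$) yields the $B^{3/4}$.

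The main obstacle, and the step I would spend the most care on, is the off-diagonal term in $\mathcal{M}^{+}_1$: one must genuinely exploit quartic reciprocity to flip $\leg{m}{n}_4$ into $\leg{n}{m}_4$, carefully tracking the unit-valued factors $i^{(\cdots)}$ that depend on the residues of $n$ and $m$ modulo $16$ (these interact with the $\tfrac{1+\chi_n(-1)}{2}$ congruence and cannot be dropped), and then decide — for each dyadic range of $m$ relative to $Q$ — which entry of the minimum in Lemma \ref{quarticlargesieve} to use, versus when to instead invoke the analytic continuation and growth bounds of $h(r,s;\chi)$ from Lemma \ref{lem1}. The interplay between the "perfect square $m$" terms (which contribute a secondary main term of lower order, or are absorbed) and the genuinely oscillating terms also needs attention. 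The other genuinely delicate point is identifying the constant $C$ in \eqref{eq:c} precisely: this requires computing the residue at $s=1$ of the Euler product attached to primary squarefree $n$ with no rational prime factors and prescribed behaviour mod $16$, which is a finite but fiddly local computation at the prime $2$ and at split/inert rational primes. Everything else — Mellin inversion, the decay estimates for $V_{\pm1}$, truncating the $m$-sums — is routine given the tools already recorded in Section \ref{sec 2}.
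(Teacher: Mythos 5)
Your treatment of $\mathcal{M}^{\pm}_2$ is essentially the paper's: after \eqref{taug} one combines $\overline{\chi}_n(\pm 8im)\,g(n)$ into $g(\cdot,n)$ via \eqref{eq:gmult}, writes the $n$-sum through the Gauss-sum Dirichlet series $h(r,s;\chi)$ of Lemma \ref{lem1}, shifts the contour past the pole (at $s=3/4$ after the $N(n)^{-1/2}$ shift) with residue $\ll N(r)^{1/8+\varepsilon}$, and sums over $m\ll B^{1+\varepsilon}$ to get $Q^{3/4+\varepsilon}B^{3/4+\varepsilon}$; you omit the removal of the ``no rational prime divisor'' condition and the resulting $d$-sum with the $|d|\le Y$ versus $|d|>Y$ optimization, and your reduction ``$m=m_1m_2^2$ with $m_1$ squarefree'' is wrong for a quartic symbol (you would need $m_1$ fourth-power-free), but these are repairable details.

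The treatment of $\mathcal{M}^{\pm}_1$, however, has two genuine gaps. First, the main term is misidentified: it does not come from $m=1$ alone but from \emph{all} fourth-power $m$, since for $m=k^4$ the character $\chi_n(m)$ is identically $1$ on $(n,k)=1$ and the $n$-sum exhibits no cancellation whatsoever; each such $m$ contributes $\asymp Q k^{-2}$, so their total is of the same order $Q$ as the $m=1$ term and cannot be ``absorbed'' or treated as lower order (your parenthetical about ``perfect square $m$'' terms is both the wrong power and the wrong size). This is exactly why the paper converts the full $m,n$-sum into $\sum_m m^{-1/2}\int L(s,\psi_{md^4})\widetilde f(s)\,\mathrm{d}s$ and takes the residues at $s=1$ for all fourth-power $m$; the factor $Z(2)$ in the constant $C$ of \eqref{eq:c} is precisely this sum over fourth powers, so your plan would produce the wrong constant. (Relatedly, the $QA^{-1/4+\varepsilon}$ error arises from the contour shift limited by convergence of $Z(2+4s)$ at $\Re(s)=-1/4$, not from an $O(A_n^{-1/2})$ correction summed over $n$, which would give $QA^{-1/2}$.) Second, your error-term strategy for the non-fourth-power $m$ — quartic reciprocity plus the large sieve of Lemma \ref{quarticlargesieve} (or Lemma \ref{lem1}, which concerns Gauss sums and is not the relevant object here) — does not reach the bound $Q^{1/2+\varepsilon}A^{1/2+\varepsilon}$: the paper states explicitly that the quartic large sieve falls just short of an admissible error term even with the Baier--Young recursion, and this is the reason Theorem \ref{firstmoment} is conditional. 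The paper's proof bounds the remainder by invoking the Lindel\"of hypothesis for the individual Hecke $L$-functions $L(1/2+it,\psi_m)$, which your argument never uses; without that (or some substitute subconvexity input strong enough), the claimed estimate \eqref{eq:M1estimate} is not obtained by the tools you propose.
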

Theorem \ref{firstmoment} follows from Lemma~\ref{lem2} by setting $B = Q^{1/5}$ and $A = Q^{4/5}$ in Lemma  \ref{lem2}.  Thus the remainder of the paper is devoted to the proof of this lemma.

\subsection{Evaluating $\mathcal{M}^+_1$, the main term}
\label{section:M1}
  We write $\mu$ for M\"{o}bius function and we define $\mu_{\mz}(d) = \mu(|d|)$ for $d \in \mz$. Then for any $n\equiv 1 \pmod{(1+i)^3}$, we note the following relation
\begin{equation}
\label{eq:ratmob}
 \sum_{\substack{d |n, d \in \mz \\ d \equiv 1 \bmod 4}} \mu_{\mz}(d) =
\begin{cases}
 1, \quad \text{$n$ has no rational prime divisors}, \\
 0, \quad \text{otherwise}.
\end{cases}
\end{equation}
  We apply the above and change variables $n \rightarrow dn$ to the sum over $n$.  Note that any square-free $d \in \mz, d \equiv 1 \pmod 4$ is also square-free when regarded as an element of $\mz[i]$ and this implies that the condition that $dn$ is square-free then is equivalent to $n$ being square-free and $(d,n) = 1$.  We then deduce that $M^+_1=M^+_{1,1}+M^+_{1,2}$, with
\begin{align*}
 \mathcal{M}^+_{1,1} &= \frac 12\sum_{\substack{d \in \mz \\ d \equiv 1 \bmod 4 }} \mu_{\mz}(d) \sum_{m=1}^{\infty} \frac{\leg{m}{d}_4}{\sqrt{m}} \sumprime_{\substack{n \equiv 1 \bmod (1+i)^3 \\ (n,d) = 1}} \leg{m}{n}_4 V_1\left(\frac{m}{A} \frac{Q}{N(nd)} \right) w\left(\frac{N(nd)}{Q}\right), \\
 \mathcal{M}^+_{1,2} &= \frac 12\sum_{\substack{d \in \mz \\ d \equiv 1 \bmod 4 }} \mu_{\mz}(d) \sum_{m=1}^{\infty} \frac{\leg{-m}{d}_4}{\sqrt{m}} \sumprime_{\substack{n \equiv 1 \bmod (1+i)^3 \\ (n,d) = 1}} \leg{-m}{n}_4 V_1\left(\frac{m}{A} \frac{Q}{N(nd)} \right) w\left(\frac{N(nd)}{Q}\right),
\end{align*}
  where, as before, $\sum'$ denotes a sum over the square-free elements of $\mz[i]$. \newline

  We first evaluate $\mathcal{M}^+_{1,1}$ by using M\"{o}bius inversion to detect the square-free condition that $n$.  So
\begin{equation*}
 \mathcal{M}^+_{1,1} = \frac 12 \sum_{\substack{d \in \mz \\ d \equiv 1 \bmod 4 }} \mu_{\mz}(d) \sum_{\substack{ l \equiv 1 \bmod (1+i)^3 \\ (l,d) = 1}} \mu_{[i]}(l) \sum_{m=1}^{\infty} \frac{\leg{m}{d l^2}_4}{\sqrt{m}} \mathcal{M}_1(d,l,m),
\end{equation*}
where
\begin{equation*}
 \mathcal{M}_1(d,l,m) = \sum_{\substack{n \equiv 1 \bmod (1+i)^3 \\ (n,d) = 1}} \leg{m}{n}_4 V_1\left(\frac{m}{A} \frac{Q}{N(ndl^2)} \right) w\left(\frac{N(ndl^2)}{Q}\right).
\end{equation*}
Now Mellin inversion gives
\begin{equation*}
 V_1\left(\frac{m}{A} \frac{Q}{N(ndl^2)} \right) w\left(\frac{N(ndl^2)}{Q}\right) = \frac{1}{2 \pi i} \int\limits_{(2)} \leg{Q}{N(ndl^2)}^s \widetilde{f}(s) \dif s,
\end{equation*}
where
\begin{equation*}
\widetilde{f}(s) = \int\limits_0^{\infty} V_1\left(\frac{m}{A} x^{-1} \right) w(x) x^{s-1} \dif x.
\end{equation*}
Integration by parts and using \eqref{2.07} shows that $\widetilde{f}(s)$ is a function satisfying the bound
\begin{equation*}
 \widetilde{f}(s) \ll (1 + |s|)^{-E} \left( 1 + m/A \right)^{-E},
\end{equation*}
for any $\Re(s) >0$ and any integer $E>0$. \newline

Thus we deduce from the above bound that
\begin{equation*}
 \mathcal{M}_1(d,l,m) = \frac{1}{2 \pi i} \int\limits_{(2)} \leg{Q}{N(dl^2)}^s L(s, \psi_{md^4}) \widetilde{f}(s) \dif s,
\end{equation*}
   with $L(s,\psi_{md^4})$ given in \eqref{eq:HeckeL}. \newline

We estimate $\mathcal{M}^+_{1,1}$  by moving the contour to the line $\Re s = 1/2$.  Observe that the Hecke $L$-function has a pole at $s=1$ when $m$ is a fourth power.  We write $\mathcal{M}_0$ for the contribution to $\mathcal{M}^+_{1,1}$ from these residues and $\mathcal{M}_1'$ for the remainder. \newline

  We evaluate $\mathcal{M}_0$ by noting that
\begin{equation*}
 \mathcal{M}_0 = \frac 12 \sum_{\substack{d \in \mz \\ d \equiv 1 \bmod{4} }} \mu_{\mz}(d) \sum_{\substack{ l \equiv 1 \bmod (1+i)^3  \\ (l,d) = 1}} \mu_{[i]}(l) \sum_{m=1}^{\infty} \frac{\leg{m}{d l^2}_4}{\sqrt{m}} \frac{Q}{N(dl^2)} \widetilde{f}(1) \mathop{\text{Res}}_{s=1} L(s, \psi_{md^4}),
\end{equation*}
where using the Mellin convolution formula shows that
\begin{equation}
\label{w}
 \widetilde{f}(1) = \int\limits_0^{\infty} V_1\left(\frac{m}{A} x^{-1} \right) w(x) \dif x = \frac{1}{2 \pi i} \int\limits_{(2)} \leg{A}{m}^s \widetilde{w}(1+s) \frac{G(s)}{s} \gamma_1(s) \dif s,  \quad  \mbox{with} \; \widetilde{w}(s) = \int\limits_0^{\infty} w(x) x^{s-1} \dif x.
\end{equation}	
From the discussions in Section \ref{sec2.4}, we see that $\psi_{md^4}$ is the principal character only when $m$ is a fourth power, in which case
\begin{equation*}
 L(s, \psi_{md^4}) = \zeta_{\mq(i)}(s) \prod_{\varpi | 2dm} \left( 1 - N(\varpi)^{-s} \right),
\end{equation*}
   where $\varpi$ denotes a prime in $\mz[i]$ in this section. \newline

Let $C_0 = \frac{\pi}{4}$, the residue of $\zeta_{\mq(i)}(s)$ at $s=1$, so that
\begin{equation*}
 \mathcal{M}_0 = \frac 12 C_0 Q \sum_{m=1}^{\infty} \frac{\widetilde{f}(1)}{m^{2}} \prod_{\varpi | 2m} \left( 1- N(\varpi)^{-1} \right)  \sum_{\substack{d \in \mz, (d,m) =1\\ d \equiv 1 \bmod{4} }} \frac{\mu_{\mz}(d)}{d^2}\prod_{\varpi | d} \left( 1 - N(\varpi)^{-1} \right) \sum_{\substack{(l,md)=1 \\ l \equiv 1 \bmod{(1+i)^3}}} \frac{\mu_{[i]}(l)}{N(l^2)}.
\end{equation*}

   Computing the sum over $l$ explicitly, we obtain that
\begin{align*}
 \mathcal{M}_0 &=\frac 12 C_0 \zeta^{-1}_{\mq(i)}(2) Q \sum_{m=1}^{\infty} \frac{\widetilde{f}(1)}{m^{2}}\prod_{\varpi | 2m} \left( 1- N(\varpi)^{-1} \right) \sum_{\substack{d \in \mz, (d,m) =1\\ d \equiv 1 \bmod{4} }} \frac{\mu_{\mz}(d)}{d^2}\prod_{\varpi | d} \left( 1 - N(\varpi)^{-1} \right) \prod_{\varpi | 2md} \left( 1- N(\varpi)^{-2} \right)^{-1} \\
 &= \frac 12  C_0 \zeta^{-1}_{\mq(i)}(2) Q \sum_{m=1}^{\infty} \frac{\widetilde{f}(1)}{m^{2}}\prod_{\varpi | 2m} \left( 1+ N(\varpi)^{-1} \right)^{-1} \sum_{\substack{d \in \mz, (d,m) =1\\ d \equiv 1 \bmod{4} }} \frac{\mu_{\mz}(d)}{d^2}\prod_{\varpi | d} \left( 1 + N(\varpi)^{-1} \right)^{-1}.
\end{align*}

   We define
\begin{align*}
  C_1= \sum_{\substack{d \in \mz \\ d \equiv 1 \bmod{4} }} \frac{\mu_{\mz}(d)}{d^2}\prod_{\varpi | d} \left( 1 + N(\varpi)^{-1} \right)^{-1}.
\end{align*}

   It is clear that $C_1$ is a constant. Using this, we have that
\begin{align*}
 \mathcal{M}_0 = \frac 12 C_0C_1 \zeta^{-1}_{\mq(i)}(2) Q \sum_{m=1}^{\infty} \frac{\widetilde{f}(1)}{m^{2}} \prod_{\varpi | 2m} \left( 1+ N(\varpi)^{-1} \right)^{-1}   \prod_{p | m/(m,2)} \left( 1-p^{-2}\prod_{\varpi | p} (1 + N(\varpi)^{-1})^{-1} \right)^{-1}.
\end{align*}

Let
\begin{equation*}
 Z(u) = \sum_{m=1}^{\infty} m^{-u} \prod_{\varpi | 2m} \left( 1+ N(\varpi)^{-1} \right)^{-1}   \prod_{p | m/(m,2)} \left( 1-p^{-2}\prod_{\varpi | p} (1 + N(\varpi)^{-1})^{-1} \right)^{-1},
\end{equation*}
which is holomorphic and bounded for $\Re(u) \geq 1 + \delta > 1$.  Then
\begin{equation*}
 \mathcal{M}_0 = \frac 12 C_0C_1 \zeta^{-1}_{\mq(i)}(2)  Q \frac{1}{2 \pi i} \int\limits_{(1)} A^s Z(2 + 4s) \widetilde{w}(1+s) \frac{G(s)}{s} \gamma_1(s) \dif s.
\end{equation*}
We move the contour of integration to $-1/4 + \varepsilon$, crossing a pole at $s=0$ only.  The integral over the new contour is $O(A^{-1/4 + \varepsilon} Q)$, while residue of the pole at $s=0$ gives
\begin{equation}
\label{eq:c}
\frac 12C Q \widetilde{w}(1), \quad \text{where} \quad C= C_0C_1  \zeta^{-1}_{\mq(i)}(2) Z(2).
\end{equation}
Note that $Z(u)$ converges absolutely at $u=2$ so that one may express $Z(2)$ explicitly as an Euler product if interested.
We then conclude that
\begin{align}
\label{m0}
 \mathcal{M}_0 = \frac 12C Q \widetilde{w}(1)+O \left( Q A^{-1/4 + \varepsilon} \right).
\end{align}

\subsection{Evaluating $\mathcal{M}^+_1$, the remainder term}
\label{section:remainderterm}

 In this section, we estimate $\mathcal{M}'_1$ and $\mathcal{M}^+_{1,2}$. Since the arguments are similar, we shall only estimate $\mathcal{M}'_1$  here. We bound everything with absolute values to see that for some large $E \in \natn$,
\begin{equation*}
|\mathcal{M}_1'| \ll  \sum_{d \leq c_1 \sqrt{Q}} \sum_{N(l) \leq c_2 \sqrt{Q}} \frac{1}{\sqrt{N(dl^2)}} \sum_{m} \frac{\sqrt{Q}}{\sqrt{m}} (1+m/A)^{-E} \int\limits^{\infty}_0 \left| L(1/2 + it, \psi_m) \right| \left( 1+|t| \right)^{-E} \dif t.
\end{equation*}
Here $c_1$ and $c_2$ are constants, chosen according to the size of the support the weight function $w$.  In view of the factor $(1+m/A)^{-E}$, we may truncate the sum over $m$ above to $m \leq M \ll A^{1+\varepsilon}$ for $\varepsilon>0$ with a small error.  \newline

Assuming the truth of the Lindel\"of hypothesis, we have for any $\varepsilon>0$ (see \cite[Corollary 5.20]{iwakow}),
\begin{align*}
 |L(1/2 + it, \psi_m)| \ll (|m|(1+|t|))^{\epsilon}.
\end{align*}

  We apply this to bound the sum over $m$ as
\begin{equation*}
%%\label{eq:Heckevariant}
 \sum_{m \leq M} \frac{1}{\sqrt{m}} |L(1/2 + it, \psi_m)| \ll M^{1/2 + \varepsilon} (1+|t|)^{\varepsilon}.
\end{equation*}

   Since $d \in \mz$, we have $N(d) = d^2$ so that summing trivially over $d$ and $l$, we obtain
\begin{equation*}
%%\label{eq:M1'}
|\mathcal{M}_1'|+|\mathcal{M}^+_{1,2}| \ll Q^{1/2 + \varepsilon} A^{1/2+ \varepsilon}.
\end{equation*}
  This combined with \eqref{m0} gives \eqref{eq:M1estimate}.

\subsection{Estimating $\mathcal{M}^+_2$}
\label{section:M2}

Using \eqref{taug}, we have $M^+_2=M^+_{2,1}+M^+_{2,2}$, where
\begin{align*}
%%\label{eq:M2recall}
 \mathcal{M}^+_{2,1} &= \frac {1}{2} \sum_{m=1}^{\infty} \frac{1}{\sqrt{m}} V_1\left(\frac{m}{B}\right) \sumprime_{\substack{ n\equiv 1 \bmod (1+i)^3 }} \frac{\overline{\chi}_{n}(8im) g(n)}{\sqrt{N(n)}} w\left(\frac{N(n)}{Q}\right), \\
  \mathcal{M}^+_{2,2} &= \frac {1}{2} \sum_{m=1}^{\infty} \frac{1}{\sqrt{m}} V_1\left(\frac{m}{B}\right) \sumprime_{\substack{ n\equiv 1 \bmod (1+i)^3 }} \frac{\overline{\chi}_{n}(-8im) g(n)}{\sqrt{N(n)}} w\left(\frac{N(n)}{Q}\right).
\end{align*}
To estimate the above expressions, we need the following result.
\begin{lemma}
\label{lemma:sumofGauss}
 For any $l \in \mz[i]$, we have
\begin{equation*}
\label{eq:sumofGausssums}
H'(l,Q):=\sumprime_{\substack{n \in \mz[i] \\ n\equiv 1 \bmod (1+i)^3}} \frac{\overline{\chi}_n(l) g(n) }{\sqrt{N(n)}} w\left(\frac{N(n)}{Q}\right) \ll  Q^{3/4 + \varepsilon} N(l)^{1/8+\varepsilon}.
\end{equation*}
\end{lemma}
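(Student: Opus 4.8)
The plan is to express $H'(l,Q)$ as a contour integral via Mellin inversion, insert the square-free condition on $n$ by M\"obius inversion over $\mz[i]$, and then recognize the resulting inner sum as (essentially) the Dirichlet series $h(r,s;\chi)$ studied in Lemma~\ref{lem1}, to which we can apply the analytic continuation and convexity-type bounds stated there. First I would write
\[
 w\left(\frac{N(n)}{Q}\right) = \frac{1}{2\pi i}\int_{(2)} \left(\frac{Q}{N(n)}\right)^{s}\widetilde{w}(s)\,\dif s,
\]
with $\widetilde{w}(s)=\int_0^\infty w(x)x^{s-1}\dif x$ entire and rapidly decaying in vertical strips because $w$ is smooth and compactly supported. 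Substituting and interchanging the sum and integral gives
\[
 H'(l,Q) = \frac{1}{2\pi i}\int_{(2)} Q^{s}\,\widetilde{w}(s) \sumprime_{\substack{n\equiv 1 \bmod (1+i)^3}} \frac{\overline{\chi}_n(l) g(n)}{N(n)^{s+1/2}}\,\dif s.
\]
Here $\overline{\chi}_n(l)=\overline{\leg{l}{n}_4}$, and the factor $\overline{\leg{l}{n}_4}g(n)$ can be rewritten using \eqref{eq:gmult}: for $(l,n)=1$ one has $\overline{\leg{l}{n}_4}g(n)=g(l,n)$ (absorbing units as needed), so the inner sum becomes a sum of the shape $\sum_n g(l,n)N(n)^{-s-1/2}$ with $n$ running over primary square-free Gaussian integers, possibly coprime to $l$.

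The next step is to remove the square-free restriction on $n$. Using M\"obius inversion over $\mz[i]$ and the multiplicativity relations \eqref{eq:gmult} and \eqref{2.03} for Gauss sums, I would write the square-free sum as a sum over $n=n_1 n_2^2$ (or equivalently insert $\sum_{a^2\mid n}\mu_{[i]}(a)$), reducing matters to the full Dirichlet series $h(r,s';\chi)$ for $\chi$ running over the relevant Hecke characters $\pmod{16}$ of trivial infinite type (the twists by $\leg{i}{\cdot}_4$, $\leg{1+i}{\cdot}_4$ that appear from making $n$ primary) and $r$ a divisor of $l$ times a small fixed factor, and $s'=s+1/2$. The contribution of the square part $n_2$ is controlled by a convergent Dirichlet series in a suitable half-plane, so it only affects the implied constant and the $N(l)^{\varepsilon}$ factor. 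At this point the integrand is meromorphic on the whole plane, holomorphic for $\Re(s)>1/2$ except for a possible pole at $s=3/4$ coming from the pole of $h(r,\,\cdot\,;\chi)$ at $s'=5/4$.

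Then I would shift the contour from $\Re(s)=2$ to $\Re(s)=\sigma_1-1 = 1/2+\varepsilon$ (in the notation of Lemma~\ref{lem1}, with $s'=s+1/2$ ranging over $\sigma_1-1/2\le \Re(s')\le \sigma_1$). Crossing the pole at $s=3/4$ produces a residue term bounded by $Q^{3/4}\cdot N(r)^{1/8+\varepsilon}\ll Q^{3/4+\varepsilon}N(l)^{1/8+\varepsilon}$, using the residue bound in Lemma~\ref{lem1} and $N(r)\ll N(l)$. On the shifted line $\Re(s)=1/2+\varepsilon$, the bound $h(r,s';\chi)\ll N(r)^{(\sigma_1-\sigma')/2+\varepsilon}(1+t^2)^{3(\sigma_1-\sigma')/2+\varepsilon}$ with $\sigma_1-\sigma'=1/2$ gives $h \ll N(l)^{1/4+\varepsilon}(1+|t|)^{3/2+\varepsilon}$; combined with $Q^{1/2+\varepsilon}$ from $Q^s$ and the rapid decay of $\widetilde{w}(s)$ in $t$, the remaining integral converges and is $O(Q^{1/2+\varepsilon}N(l)^{1/4+\varepsilon})$, which is dominated by the residue term. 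Summing over the (bounded number of) characters $\chi$ and the divisors $r$ and square parts contributes only $N(l)^{\varepsilon}$, yielding $H'(l,Q)\ll Q^{3/4+\varepsilon}N(l)^{1/8+\varepsilon}$.

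The main obstacle is the bookkeeping in the second step: carefully tracking how the square-free condition, the passage to primary representatives, and the coprimality condition $(n,l)=1$ interact with the multiplicativity of $g(l,n)$, so that the inner sum is genuinely expressed in terms of the functions $h(r,s;\chi)$ covered by Lemma~\ref{lem1} with the correct parameter $r$ (a divisor of a fixed multiple of $l$) and with the twisting Hecke characters being of the admissible type $\pmod{16}$. Once this identification is made cleanly, the contour shift and the estimates from Lemma~\ref{lem1} are routine; the only point requiring a little care is checking that $N(r)^{1/8}$, and not a larger power of $N(l)$, controls the residue, which is exactly what the stated residue bound in Lemma~\ref{lem1} provides.
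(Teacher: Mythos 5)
Your core argument --- Mellin inversion, identifying the inner sum with $h(l_1,1/2+s;\lambda)$ for a Hecke character $\lambda \bmod{16}$ coming from the non-primary part of $l$, then shifting to $\Re(s)=1/2+\varepsilon$ past the pole at $s=3/4$ and invoking Lemma~\ref{lem1} --- is exactly how the paper proves its intermediate estimate (Lemma~\ref{lemma:Hbound}), namely $H(l,X) \ll X^{1/2+\varepsilon}N(l_1)^{1/4}+X^{3/4}N(l_1)^{1/8+\varepsilon}$. One remark on your second step: the M\"obius inversion over $\mz[i]$ to remove the square-free condition is unnecessary and slightly misconceived. By \eqref{2.1} we have $g(n)=0$ unless $n$ is square-free, and $\overline{\chi}_n(l)=0$ unless $(n,l)=1$, so the square-free, coprime sum coincides term by term with the full series $h(l_1,s;\lambda)$; there is no ``square part'' series to control, and carrying out the decomposition $n=n_1n_2^2$ would only saddle you with coprimality conditions that Lemma~\ref{lem1} does not cover.

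The genuine gap is at the end. You assert that the shifted-line contribution $Q^{1/2+\varepsilon}N(l)^{1/4+\varepsilon}$ is dominated by the residue term $Q^{3/4+\varepsilon}N(l)^{1/8+\varepsilon}$; this holds only when $N(l)\le Q^{2}$, whereas the lemma is stated for all $l$ (at the very least you need the cheap patch that for $N(l)>Q^2$ the trivial bound $H'(l,Q)\ll Q\le Q^{3/4}N(l)^{1/8}$ already suffices). More importantly, the sum in the lemma is intended, and used in \S\ref{section:M2}, with $n$ additionally free of rational prime divisors, and the paper's proof has an outer layer your plan omits entirely: that condition is sieved out via \eqref{eq:ratmob}, giving $H'(l,Q)=\sum_{d}\mu_{\mz}(d)\widetilde{g}(d)\overline{\chi}_d(l)H(d^2l,Q/d^2)$ over square-free rational $d\equiv 1 \bmod 4$; one then applies Lemma~\ref{lemma:Hbound} for $|d|\le Y$, the trivial bound $H(l,X)\ll X$ for $|d|>Y$, and optimizes at $Y=Q^{1/4}N(l)^{-1/8}$. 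It is precisely this balancing over $d$ that converts the two-term bound of Lemma~\ref{lemma:Hbound} into the clean $Q^{3/4+\varepsilon}N(l)^{1/8+\varepsilon}$ of the statement; as written, your argument stops one step short of the claimed bound and does not address the rational-prime-divisor restriction at all.
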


To prove Lemma \ref{lemma:sumofGauss}, we first use
\eqref{eq:ratmob} to remove the condition that $n$ has no rational prime divisors. Note that it follows from  \eqref{2.03} and the quartic reciprocity that for $d \in \mz, n \in \mz[i], (dn,2)=1, (d,n)=1$ and $d$ primary,
\begin{align*}
 g(dn) = \leg{n}{d}_4\leg{d}{n}_4 g(d) g(n)= \overline{\chi}_n(d^2) g(d) g(n).
\end{align*}

  We use the notation $\widetilde{g}(d) = g(d) N(d)^{-1/2}$ so that $|\widetilde{g}(d)| \leq 1$ by \eqref{2.1}. It follows further from \eqref{2.1} that $g(n) = 0$ unless $n$ is square-free. This gives that
\begin{equation*}
\label{eq:M2sieved}
 H'(l,Q) = \sum_{\substack{d \in \mz \\ d \equiv 1 \bmod{4}}} \mu_{\mz}(d) \tilde{g}(d)\overline{\chi}_d(l) H(d^2l, Q/d^2),
\end{equation*}
where
\begin{equation*}
H(d^2l, X) = \sum_{\substack{n \in \mz[i] \\ n \equiv 1 \bmod{(1+i)^3}}}  \frac{\overline{\chi}_{n}(d^2l) g(n)}{\sqrt{N(n)}} w\left(\frac{N(n)}{X}\right).
\end{equation*}
We estimate $H$ with the next lemma.
\begin{lemma}
\label{lemma:Hbound}
 For any $l \in \mz[i]$, write $l = l_0 l_1$ where $l_0$ is a unit times a power of $1+i$, and $l_1 \equiv 1 \pmod{(1+i)^3}$.  Then we have
\begin{equation*}
\label{eq:Hbound}
 H(l, X) \ll X^{1/2 + \varepsilon} N(l_1)^{1/4} + X^{3/4} N(l_1)^{1/8 + \varepsilon}.
\end{equation*}
\end{lemma}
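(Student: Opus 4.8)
# Proof Plan for Lemma~\ref{lemma:Hbound}

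The plan is to evaluate $H(l,X)$ by opening up the Gauss sum $g(n)$ using its definition and then applying Mellin inversion together with the analytic information on $h(r,s;\chi)$ supplied by Lemma~\ref{lem1}. First I would write, for $n\equiv 1\bmod{(1+i)^3}$, the smooth weight $w(N(n)/X)$ via Mellin inversion as $\frac{1}{2\pi i}\int_{(2)}\widehat{w}(s)(X/N(n))^s\,\dif s$, so that
\begin{equation*}
 H(l,X) = \frac{1}{2\pi i}\int\limits_{(2)} X^s \widetilde{w}(s) \sum_{\substack{n\in\mz[i]\\ n\equiv 1\bmod{(1+i)^3}}} \frac{\overline{\chi}_n(d^2 l)\, g(n)}{N(n)^{s+1/2}}\,\dif s.
\end{equation*}
The inner Dirichlet series is essentially $h(r,s+\tfrac12;\chi)$ for an appropriate modulus-$16$ Hecke character $\chi$ (coming from $\overline{\chi}_n(d^2 l)$, which by quartic reciprocity becomes a quartic symbol in $n$ times sign characters on $n$) and an appropriate $r$ built out of $l_1$ (the power of $1+i$ in $l$ is absorbed into the character, so only $N(l_1)$ survives). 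The key point is that the variable in $g(r,n)$ that governs the growth of $h$ is $r$, whose norm is $\asymp N(l_1)$.

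Next I would move the contour of integration in the $s$-variable to the left. By Lemma~\ref{lem1}, $h(r,s;\chi)$ is holomorphic for $\Re(s)>1$ except for a possible pole at $s=5/4$; translating to our shifted variable, $h(r,s+\tfrac12;\chi)$ has a possible pole where $s+\tfrac12 = 5/4$, i.e. at $s = 3/4$. Since we start at $\Re(s)=2$ and want to push past this, the residue at $s=3/4$ contributes a main-type term of size
\begin{equation*}
 X^{3/4}\,\bigl|\widetilde{w}(3/4)\bigr|\cdot\Bigl|\mathop{\mathrm{Res}}_{s=5/4} h(r,s;\chi)\Bigr| \ll X^{3/4} N(l_1)^{1/8+\varepsilon},
\end{equation*}
using the residue bound in Lemma~\ref{lem1}. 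This is exactly the second term in the claimed bound. After crossing this pole I would continue the contour to $\Re(s) = \tfrac12 + \varepsilon$ (so that the shifted argument $s+\tfrac12$ sits at $1+\varepsilon$, within the region $\sigma_1 \ge \sigma \ge \sigma_1 - \tfrac12$ of Lemma~\ref{lem1} with $\sigma_1 = 3/2+\varepsilon$). On this line, Lemma~\ref{lem1} gives $h(r,s+\tfrac12;\chi) \ll N(r)^{1/4+\varepsilon}(1+t^2)^{3/4+\varepsilon}$; combined with the rapid decay of $\widetilde{w}(s)$ in vertical strips, the remaining integral is $\ll X^{1/2+\varepsilon} N(l_1)^{1/4+\varepsilon}$, which is the first term.

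The main obstacle — and the step requiring the most care — is the bookkeeping in the first step: correctly identifying the Dirichlet series $\sum_n \overline{\chi}_n(d^2l) g(n) N(n)^{-s}$ with an instance of $h(r,s;\chi)$. One must use $\overline{\chi}_n(d^2 l) = \overline{\leg{d^2 l}{n}_4}$, decompose $l = l_0 l_1$, and invoke the quartic reciprocity law and the supplement laws \eqref{2.05} to move the symbol to $\leg{n}{\cdot}_4$ form modulo Hecke characters of modulus dividing $16$; the factor $l_0$, being a unit times a power of $1+i$, is entirely absorbed into such a character (explaining why only $N(l_1)$, not $N(l)$, appears in the bound), while $l_1$ becomes the ``$r$'' argument and $d^2$ (a perfect square, hence trivial under the quartic symbol up to the quadratic part, which is also controlled) contributes only to lower-order factors that are harmless since $d$ ranges over a short sum. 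One also needs the coprimality condition $(n,r)=1$ in the definition of $h$; this is handled by a further Möbius inversion over the common divisors, producing only an extra divisor-bounded sum that is absorbed into the $N(l_1)^{\varepsilon}$. Once the identification is made, the contour shift and the two estimates above follow routinely from Lemma~\ref{lem1}.
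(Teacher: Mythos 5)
Your overall strategy is the paper's: Mellin inversion in $X$, identification of the $n$-sum with $h(l_1,\tfrac12+s;\lambda)$ for a Hecke character $\lambda$ modulo $16$ that absorbs $l_0$ (via the supplement laws), a contour shift to $\Re(s)=\tfrac12+\varepsilon$ crossing the pole at $s=3/4$, the residue bound of Lemma~\ref{lem1} giving $X^{3/4}N(l_1)^{1/8+\varepsilon}$, and the bound $h(l_1,1+\varepsilon+it;\lambda)\ll N(l_1)^{1/4+\varepsilon}(1+t^2)^{3/4+\varepsilon}$ on the new line giving $X^{1/2+\varepsilon}N(l_1)^{1/4+\varepsilon}$. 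All of this matches the paper's proof of Lemma~\ref{lemma:Hbound}.

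However, the step you yourself call the crux --- getting $l_1$ into the $r$-slot of $h(r,s;\chi)$ --- is mis-described, and as written it would stall. No quartic reciprocity is involved, and invoking it is counterproductive: flipping gives $\overline{\leg{n}{l_1}}_4$, a character of $n$ of modulus dividing $16l_1$ multiplying $g(n)=g(1,n)$, which is \emph{not} an instance of $h(r,s;\chi)$ (the character $\chi$ there must have modulus $16$), and it does not turn $g(n)$ into $g(l_1,n)$; you would have to flip back to proceed. The mechanism the paper uses, and the ingredient missing from your plan, is the twisting identity \eqref{eq:gmult}: for $(n,l_1)=1$ one has $\overline{\leg{l_1}{n}}_4\, g(n)=g(l_1,n)$ directly, with the symbol kept in its original orientation. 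Relatedly, no extra M\"obius inversion is needed to impose $(n,l_1)=1$, since $\leg{l_1}{n}_4$ vanishes whenever $(n,l_1)\neq 1$, so the coprimality in the definition of $h$ is automatic. Finally, the factor $d^2$ you carry belongs to the application of the lemma (where $H$ is evaluated at $d^2l$), not to the lemma itself; and in that application it is not ``harmless lower order'' --- it is simply part of the $l_1$ of the lemma and appears as $N(d^2l)^{1/4}$ and $N(d^2l)^{1/8+\varepsilon}$ in the bound for $H'(l,Q)$. These are repairable slips rather than a different (or failed) route, but the identification with $h$ needs \eqref{eq:gmult}, not reciprocity.
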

\begin{proof}
 Writing $l = l_0 l_1$ as above, we set
\begin{equation*}
  \overline{\leg{l}{n}}_4 = \overline{\leg {l_1}{n}}_4 \cdot \overline{\leg{l_0}{n}}_4.
\end{equation*}
From the discussion in Section \ref{sec2.4}, the function $\lambda(n) = \overline{\leg{l_0}{n}}_4$ is a Hecke character $\pmod{16}$ of trivial infinite type.
Thus
\begin{equation*}
H(l,X) = \sum_{\substack{n \in \mz[i] \\ n \equiv 1 \bmod{(1+i)^3}}}  \frac{\lambda(n) \overline{\leg {l_1}{n}}_4 g(n)}{\sqrt{N(n)}} w\left(\frac{N(n)}{X}\right).
\end{equation*}
Note that the identity \eqref{eq:gmult} implies $\overline{\leg {l_1}{n}}_4 g(n) = g(l_1, n)$ for $(n, l_1) = 1$.  Upon introducing the Mellin transform of $w$, we get
\begin{equation}
\label{eq:HlX}
 H(l,X) =
\frac{1}{2 \pi i} \int\limits_{(2)} \widetilde{w}(s) X^s h( l_1, 1/2 + s;\lambda) \dif s,
\end{equation}
   where $\widetilde{w}(s)$ is defined as in \eqref{w}. \newline

  We move the line of integration in \eqref{eq:HlX} to $\Re(s) = 1/2 + \varepsilon$, crossing a pole at $s =3/4$, which contributes
\begin{equation*}
\ll X^{3/4} N(l_1)^{1/8 + \varepsilon}.
\end{equation*}
The contribution from the new line of integration is
\begin{equation*}
\ll  X^{1/2+\varepsilon} N(l_1)^{1/4}.
\end{equation*}
This completes the proof of Lemma \ref{lemma:Hbound}.
\end{proof}

  Now, to prove Lemma \ref{lemma:sumofGauss}, we treat $|d| \leq Y$ and $|d| > Y$ separately, where $Y$ is a parameter to be chosen later.  For $|d| \leq Y$ we use Lemma \ref{lemma:Hbound}, while for $|d| > Y$ we use the trivial bound $H(l,X) \ll X$.  Thus
\begin{equation*}
 H'(l,Q) \ll \sum_{|d| \leq Y} \leg{Q}{d^2}^{1/2+\varepsilon} N(d^2l)^{1/4} + \sum_{|d| \leq Y} \leg{Q}{d^2}^{3/4} N(d^2l)^{1/8+\varepsilon} + \sum_{|d| > Y} \frac{Q}{d^2},
\end{equation*}
which simplifies as
\begin{equation*}
 H'(l,Q) \ll Q^{1/2 + \varepsilon} Y^{1-2\varepsilon} N(l)^{1/4} + Q Y^{-1} + Q^{3/4} N(l)^{1/8 + \varepsilon}Y^{\varepsilon}.
\end{equation*}
The optimal choice of $Y$ is $Y = Q^{1/4} N(l)^{-1/8}$ and gives Lemma \ref{lemma:sumofGauss}. Applying this lemma and summing trivially over $m$ in the expressions for $M^+_{2,1}$ and $M^+_{2,2}$, one easily deduces \eqref{eq:M2bound}.  This completes the proof of Lemma \ref{lem2}.  \newline

\noindent{\bf Acknowledgments.} P. G. is supported in part by NSFC grant 11871082 and L. Z. by the FRG grant PS43707 and the Faculty Goldstar Award PS53450. Parts of this work were done when P. G. visited the University of New South Wales (UNSW). He wishes to thank UNSW for the invitation, financial support and warm hospitality during his pleasant stay.

\bibliography{biblio}
\bibliographystyle{amsxport}

\vspace*{.5cm}

\noindent\begin{tabular}{p{8cm}p{8cm}}
School of Mathematical Sciences & School of Mathematics and Statistics \\
Beihang University & University of New South Wales \\
Beijing 100191 China & Sydney NSW 2052 Australia \\
Email: {\tt penggao@buaa.edu.cn} & Email: {\tt l.zhao@unsw.edu.au} \\
\end{tabular}

\end{document}